\documentclass[a4paper]{amsart}
\usepackage{graphicx}
\usepackage{amsmath}
\usepackage{amssymb}
\usepackage{oldgerm}
\usepackage[all]{xy}

\newcommand{\Hom}{\operatorname{Hom}\nolimits}
\newcommand{\stHom}{\operatorname{\underline{\Hom}}\nolimits}

\renewcommand{\mod}{\operatorname{mod}\nolimits}
\newcommand{\stmod}{\operatorname{\underline{\mod}}\nolimits}

\newcommand{\Coker}{\operatorname{Coker}\nolimits}

\newcommand{\Ann}{\operatorname{Ann}\nolimits}

\newcommand{\Ext}{\operatorname{Ext}\nolimits}

\newcommand{\Maxspec}{\operatorname{MaxSpec}\nolimits}

\newcommand{\HH}{\operatorname{HH}\nolimits}

\newcommand{\rad}{\operatorname{rad}\nolimits}
\newcommand{\m}{\operatorname{\mathfrak{m}}\nolimits}
\newcommand{\az}{\operatorname{\mathfrak{a}}\nolimits}
\newcommand{\ra}{\operatorname{\mathfrak{r}}\nolimits}
\newcommand{\La}{\operatorname{\Lambda}\nolimits}
\newcommand{\Z}{\operatorname{Z}\nolimits}
\newcommand{\cx}{\operatorname{cx}\nolimits}

\newcommand{\op}{\operatorname{op}\nolimits}
\newcommand{\V}{\operatorname{V}\nolimits}
\newcommand{\A}{\operatorname{A}\nolimits}

\newcommand{\Tr}{\operatorname{Tr}\nolimits}

\newcommand{\e}{\operatorname{e}\nolimits}

\newcommand{\Lae}{\operatorname{\Lambda^{\e}}\nolimits}
\newcommand{\lcm}{\operatorname{lcm}\nolimits}

\newcommand{\ev}{\operatorname{ev}\nolimits}

\newtheorem{theorem}{Theorem}[section]

\newtheorem{corollary}[theorem]{Corollary}

\newtheorem{lemma}[theorem]{Lemma}
\newtheorem{proposition}[theorem]{Proposition}
\theoremstyle{definition}
\newtheorem{definition}[theorem]{Definition}
\theoremstyle{definition}
\newtheorem*{example}{Example}

\theoremstyle{definition}

\theoremstyle{remark}
\newtheorem*{remark}{Remark}
\theoremstyle{definition}
\newtheorem*{assumption}{Assumption}

\begin{document}
\title{Twisted support varieties}
\author{Petter Andreas Bergh}
\address{Petter Andreas Bergh \newline Institutt for matematiske fag \\
NTNU \\ N-7491 Trondheim \\ Norway}

\email{bergh@math.ntnu.no}

\subjclass[2000]{16E05, 16E30, 16E40, 16P90}

\keywords{Support varieties, twisting automorphism}

\maketitle

\begin{abstract}
We define and study twisted support varieties for modules over an
Artin algebra, where the twist is induced by an automorphism of the
algebra. Under a certain finite generation hypothesis we show that
the twisted variety of a module satisfies Dade's Lemma and is one
dimensional precisely when the module is periodic with respect to
the twisting automorphism. As a special case we obtain results on $D
\Tr$-periodic modules over Frobenius algebras.
\end{abstract}

\section{Introduction}\label{intro}

In this paper we define and study a new type of cohomological
support varieties, called \emph{twisted support varieties}, for
modules over Artin algebras, varieties sharing many of the
properties of those defined for Artin algebras in \cite{Erdmann1}
and \cite{Snashall}. In those papers the underlying geometric object
used to define support varieties was the Hochschild cohomology ring
of an algebra, whereas our geometric object is a ``twisted"
Hochschild cohomology ring, the twist being induced by an
automorphism of the algebra.

By introducing a finite generation hypothesis similar to the one
used in \cite{Bergh1} we are able to relate the dimension of the
twisted variety of a module to the polynomial growth of the lengths of
the modules in its minimal projective resolution. In particular
Dade's Lemma holds, that is, the twisted variety of a module is
trivial if and only if the module has finite projective dimension.
This property is highly desirable in any cohomological variety
theory. Moreover, the modules whose twisted varieties are one
dimensional are precisely those which are periodic with respect to
the twisting automorphism, a concept defined in Section \ref{var}.
As a special case we obtain results on $D \Tr$-periodic modules over
Frobenius algebras.

Both when studying the twisted Hochschild cohomology ring and
twisted support varieties we illustrate the theory with examples
taken from the well known representation theory of the four
dimensional Frobenius algebra
$$k \langle x,y \rangle / ( x^2, xy+qyx, y^2 ),$$
where $k$ is a field and $q \in k$ is not a root of unity. Indeed,
detecting the $D \Tr$-periodic modules over this algebra by means of
support varieties was the motivation for this paper in the first
place.

\section{Preliminaries}\label{pre}

Throughout we let $k$ be a commutative Artin ring and $\La$ an
indecomposable Artin $k$-projective $k$-algebra with Jacobson
radical $\ra$. Unless otherwise specified all modules considered are
finitely generated left modules. We denote by $\mod \La$ the
category of all finitely generated left $\La$-modules, and we fix a
nonzero module $M \in \mod \La$ with minimal projective resolution
$$\mathbb{P} \colon \cdots \to P_2 \xrightarrow{d_2} P_1
\xrightarrow{d_1} P_0 \xrightarrow{d_0} M \to 0.$$

Let $F \colon \mod \La \to \mod \La$ be an exact $k$-functor. It follows
from a theorem of Watts (see \cite[Corollary 3.34]{Rotman}) that
there exists a bimodule $Q$ having the property that $F$ is
naturally equivalent to the functor $Q \otimes_{\La}-$, and that $Q$
may be chosen to be $F(\La)$. Since $Q$ is in $\mod \La$ it has
finite $k$-length and is therefore finitely generated also as a
right module, and this implies that $Q$ is projective as a right
module (it is flat). Suppose $Q$ is projective also as a left module
(but not necessarily as a bimodule). Then if $P$ is a projective
$\La$-module the functor $\Hom_{\La}(F(P),-)$, being naturally
isomorphic to $\Hom_{\La}(P,\Hom_{\La}(Q,-))$, is exact, hence
$F(P)$ is projective.

\sloppy For such a functor $F$ and a positive integer $t \in
\mathbb{N}$, define a homogeneous product in the graded $k$-module
$$\Ext_{\La}^{t*}(F^*(M),M) = \bigoplus_{n=0}^{\infty}
\Ext_{\La}^{tn} (F^n(M),M)$$ as follows; if $\eta$ and $\theta$ are
two homogeneous elements in $\Ext_{\La}^{t*} (F^*(M),M)$, then
$$\eta \theta \stackrel{\text{def}}{=} \eta \circ F^{|\eta|/t}
(\theta),$$ where ``$\circ$" denotes the Yoneda product. In other
words, if $\eta$ and $\theta$ are given as exact sequences
\begin{eqnarray*}
\eta \colon 0 \to M \to A_{tn-1} \to \cdots \to
A_0 \to F^n(M) \to 0 \\
\theta \colon 0 \to M \to B_{tm-1} \to \cdots \to B_0 \to F^m(M)\to
0,
\end{eqnarray*}
then $\eta \theta$ is given as the exact sequence obtained from the
Yoneda product of the sequence $\eta$ with the sequence
$$F^n(\theta) \colon 0 \to F^n(M) \to
F^n(B_{tm-1}) \to \cdots \to F^n(B_0) \to F^{m+n}(M) \to 0.$$
Furthermore, for a $\La$-module $N$, define a homogeneous right
scalar action on $\Ext_{\La}^{t*}(F^*(M),N)$ from
$\Ext_{\La}^{t*}(F^*(M),M)$ as follows; if $\mu \in
\Ext_{\La}^{t*}(F^*(M),N)$ and $\eta \in \Ext_{\La}^{t*}(F^*(M),M)$
are homogeneous elements, then $\mu \eta \stackrel{\text{def}}{=}
\mu \circ F^{|\mu|/t}(\eta)$.

\begin{lemma}\label{ring}
Extending the homogeneous product and scalar product defined above
to all graded elements makes $\Ext_{\La}^{t*}(F^*(M),M)$ into a
graded $k$-algebra and $\Ext_{\La}^{t*}(F^*(M),N)$ into a graded
right $\Ext_{\La}^{t*}(F^*(M),M)$-module.
\end{lemma}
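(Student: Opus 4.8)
The plan is to reduce everything to three standard facts about the ordinary Yoneda (splice) product ``$\circ$'' on $\Ext_{\La}$, together with the only hypotheses we actually need on $F$, namely that it is an exact $k$-functor. First I would record: the Yoneda product is $k$-bilinear, it is associative wherever the composites are defined, and for any module $X$ the class $1_X \in \Ext_{\La}^0(X,X) = \Hom_{\La}(X,X)$ of the identity map acts as a two-sided identity for $\circ$. Second, since $F$ is exact, applying $F^n$ to a long exact sequence again gives a long exact sequence, and applying it to a chain of morphisms of exact sequences witnessing a Yoneda equivalence gives such a chain between the images; hence $F^n$ induces well-defined maps $F^n \colon \Ext_{\La}^j(X,Y) \to \Ext_{\La}^j(F^nX,F^nY)$. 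Because $F$ is a $k$-functor and $F^n$ is realized by splicing, these maps are $k$-linear and satisfy $F^n(1_X) = 1_{F^nX}$, $F^n(\alpha \circ \beta) = F^n(\alpha)\circ F^n(\beta)$, and $F^m\circ F^n = F^{m+n}$ as maps on $\Ext_{\La}$.

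Granting this, I would first check that the twisted product is graded and well defined. For homogeneous $\eta \in \Ext_{\La}^{tn}(F^nM,M)$ and $\theta \in \Ext_{\La}^{tm}(F^mM,M)$ one has $|\eta|/t = n$, so $F^{|\eta|/t}(\theta) = F^n(\theta)$ lies in $\Ext_{\La}^{tm}(F^{n+m}M,F^nM)$ and therefore $\eta\theta = \eta \circ F^n(\theta)$ lies in $\Ext_{\La}^{t(n+m)}(F^{n+m}M,M)$, which is precisely the degree $n+m$ component; the class is well defined because both $\circ$ and $F^n$ are. One then extends the product biadditively to all of $\Ext_{\La}^{t*}(F^*(M),M)$, which is unambiguous by the directness of the sum, and similarly for the scalar action on $\Ext_{\La}^{t*}(F^*(M),N)$.

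It then remains to verify the algebra and module axioms, each of which reduces to a one-line computation from the properties above. For $k$-bilinearity, on homogeneous elements of a fixed degree the exponent $|\eta|/t$ is constant, so linearity in the right variable follows from $k$-linearity of $F^n$ and of $\circ$ in its second slot, and linearity in the left variable from $k$-linearity of $\circ$ in its first slot; one then extends biadditively. For associativity, with $\eta,\theta,\zeta$ homogeneous of degrees $n,m,p$,
$$
(\eta\theta)\zeta = \bigl(\eta\circ F^n(\theta)\bigr)\circ F^{n+m}(\zeta), \qquad \eta(\theta\zeta) = \eta\circ F^n\!\bigl(\theta\circ F^m(\zeta)\bigr) = \eta\circ F^n(\theta)\circ F^{n+m}(\zeta),
$$
where the last equality uses that $F^n$ is multiplicative and $F^nF^m = F^{n+m}$, and associativity of $\circ$ shows the two sides coincide. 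The class $1_M \in \Ext_{\La}^0(F^0M,M)$ is a two-sided identity: $1_M\cdot\eta = 1_M\circ F^0(\eta) = \eta$ and $\eta\cdot 1_M = \eta\circ F^n(1_M) = \eta\circ 1_{F^nM} = \eta$. The right module axioms for $\Ext_{\La}^{t*}(F^*(M),N)$ — biadditivity, $\mu(\eta\theta) = (\mu\eta)\theta$, and $\mu\cdot 1_M = \mu$ — follow from the identical computations with $\mu$ in place of the leftmost factor.

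The only step needing genuine care is the one packaged into the first paragraph: that $F^n$ descends to $\Ext_{\La}$ and is compatible with the Yoneda product, i.e.\ $F^n(\alpha\circ\beta) = F^n(\alpha)\circ F^n(\beta)$ and $F^n$ respects Yoneda equivalence. This is exactly where exactness of $F$ is used, since it guarantees that $F^n$ of a splice of exact sequences is again exact and that $F^n$ of a morphism of exact sequences is again one. Once this is in place, the remaining verifications are purely formal bookkeeping with degrees. Alternatively, using that $F$ preserves projectives — established in the Preliminaries, so that $F^n(\mathbb{P})$ is a projective resolution of $F^n(M)$ — one may compute $\Ext_{\La}$ from these resolutions and realize the twisted products as composites of chain maps, trading the exact-sequence bookkeeping for chain-homotopy invariance.
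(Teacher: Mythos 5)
Your proposal is correct and follows essentially the same route as the paper: the only substantive input, that each $F^n$ induces well-defined $k$-linear maps on $\Ext_{\La}$ compatible with the Yoneda product, is exactly the point the paper's proof dwells on, and it secures it by the syzygy/chain-map realization you mention at the end (representing a class by a map $\Omega_{\La}^{tn}(F^n(M)) \to M$ and using that $F^m$ preserves projective resolutions and pushouts, so additivity of $F^m$ on $\Hom$ gives additivity on $\Ext$). The remaining verifications (associativity, distributivity, identity, module axioms) are, as in your write-up, formal; only note that the $k$-linearity of $F^n$ on $\Ext$ classes is not a consequence of ``realization by splicing'' but of preservation of the Baer-sum constructions, which is most cleanly obtained precisely by the projective-resolution argument you list as the alternative.
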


\begin{proof}
The product in $\Ext_{\La}^{t*}(F^*(M),M)$ is associative, and the
right distributive law holds; if $\theta_1,\theta_2$ and $\eta$ are
homogeneous elements such that $\theta_1$ and $\theta_2$ are of the
same degree $tn$, then
$$(\theta_1+\theta_2)\eta =
(\theta_1+\theta_2) \circ F^n(\eta) = \theta_1 \circ F^n(\eta) +
\theta_2 \circ F^n(\eta) = \theta_1 \eta + \theta_2 \eta.$$ Now
suppose the degree of $\eta$ is $tm$. We can represent $\theta_i$ as
a map
$$\Omega_{\La}^{tn}(F^n(M)) \xrightarrow{f_{\theta_i}} M,$$
and since the functor $F^m$ preserves direct limits (in particular
pushouts) and projective resolutions (though it may not preserve
\emph{minimal} resolutions), we see that the map
$$F^m( \Omega_{\La}^{tn}(F^n(M))) \xrightarrow{F^m(f_{\theta_i})}
F^m(M)$$ represents $F^m(\theta_i)$. As $F^m$ is additive we have
$F^m(f_{\theta_1}+f_{\theta_2})=F^m(f_{\theta_1})+F^m(f_{\theta_2})$
as elements of $\Hom_{\La}(F^m( \Omega_{\La}^{tn}(F^n(M))),F^m(M))$,
and this implies that
$F^m(\theta_1+\theta_2)=F^m(\theta_1)+F^m(\theta_2)$. Therefore the
left distributive law also holds for the homogeneous product;
\begin{eqnarray*}
\eta (\theta_1+\theta_2) &=& \eta \circ \left (
F^m(\theta_1+\theta_2) \right ) = \eta \circ \left (
F^m(\theta_1)+F^m(\theta_2) \right ) \\
&=& \eta \circ F^m(\theta_1) + \eta \circ F^m(\theta_2) = \eta
\theta_1 + \eta \theta_2.
\end{eqnarray*}
Extending this homogeneous product in the natural way we see that
$\Ext_{\La}^{t*}(F^*(M),M)$ becomes a graded $k$-algebra, and
similar arguments show that $\Ext_{\La}^{t*}(F^*(M),N)$ is a graded
right $\Ext_{\La}^{t*}(F^*(M),M)$-module.
\end{proof}

\begin{example}
Suppose $k$ is a field and $\La$ is selfinjective, and consider a
bimodule $B$ in the stable category $\stmod \Lae$ of finitely
generated $\Lae$-modules modulo projectives. The Auslander-Reiten
translation $\tau_{\Lae} = D \Tr$ is a self-equivalence on $\stmod
\Lae$, and we can endow the graded $k$-vector space
$$\mathbb{A}(B, \tau_{\Lae}) = \Hom_{\Lae} (B,B) \oplus
\bigoplus_{i=1}^{\infty} \stHom_{\Lae} ( \tau_{\Lae}^i(B),B)$$ with
a product as follows; for two homogeneous elements $f \in
\mathbb{A}(B, \tau_{\Lae})_m$ and $g \in \mathbb{A}(B,
\tau_{\Lae})_n$ we define $fg$ to be the composition $f \circ
\tau_{\Lae}^m(g) \in \mathbb{A}(B, \tau_{\Lae})_{m+n}$. In this way
$\mathbb{A}(B, \tau_{\Lae})$ becomes a graded $k$-algebra. Now for
any selfinjective Artin algebra $\Gamma$ the functors
$\tau_{\Gamma}^i$ and $\Omega_{\Gamma}^{2i} \mathcal{N}^i$ are
isomorphic by \cite[Proposition IV.3.7]{Auslander}, where
$\mathcal{N}$ is the Nakayama functor $D \Hom_{\Gamma}(-, \Gamma)$.
Therefore the orbit algebra $\mathbb{A}(B, \tau_{\Lae})$ of the
bimodule $B$ has the form
$$\Hom_{\Lae} (B,B) \oplus \bigoplus_{i=1}^{\infty} \stHom_{\Lae} (
\Omega_{\Lae}^{2i}( \mathcal{N}^iB ),B),$$ giving an isomorphism
$$\mathbb{A}(B, \tau_{\Lae}) \simeq \bigoplus_{i=0}^{\infty}
\Ext_{\Lae}^{2i} ( \mathcal{N}^iB,B)$$ of graded $k$-algebras.

The algebra $\mathbb{A}(\La, \tau_{\Lae})$ is called the
\emph{Auslander-Reiten orbit algebra} of $\La$, and these algebras
have been extensively studied by Z.\ Pogorza{\l}y. In
\cite{Pogorzaly1} it was shown that they are invariant under stable
equivalences of Morita type between symmetric algebras; if $\Gamma$
and $\Delta$ are finite dimensional symmetric $K$-algebras (where
$K$ is a field) stably equivalent of Morita type, then
$\mathbb{A}(\Gamma, \tau_{\Gamma^{\e}})$ and $\mathbb{A}(\Delta,
\tau_{\Delta^{\e}})$ are isomorphic $K$-algebras. This was
generalized in \cite{Pogorzaly2} to arbitrary finite dimensional
selfinjective algebras. In \cite{Pogorzaly4} the Auslander-Reiten
orbit algebras of a class of finite dimensional basic connected
selfinjective Nakayama $K$-algebras were computed. Namely, it was
shown that if $\Gamma$ is such an algebra of
$\tau_{\Gamma^{\e}}$-period $1$, then $\mathbb{A}(\Gamma,
\tau_{\Gamma^{\e}}) \simeq K[x]$ if $\Gamma$ is a radical square
zero algebra, and if not then there exists a natural number $t$ such
that $\mathbb{A}(\Gamma, \tau_{\Gamma^{\e}}) \simeq K[x,y]/(y^t)$.
In \cite{Pogorzaly3} $\tau$-periodicity was investigated using
similar techniques as was used in \cite{Schulz} to study
syzygy-periodicity.
\end{example}

\section{Twisted Hochschild cohomology}\label{hoc}

The underlying geometric object used in \cite{Erdmann1} and
\cite{Snashall} to define support varieties was the Hochschild
cohomology ring $\HH^* ( \La, \La ) = \Ext_{\Lae}^*(\La,\La) =
\bigoplus_{n=0}^{\infty} \Ext_{\Lae}^n(\La,\La)$ of $\La$ (where
$\Lae$ is the enveloping algebra $\La \otimes_k \La^{\op}$ of
$\La$). For an Artin $k$-algebra $\Gamma$ and any pair of
$\La$-$\Gamma$-bimodules $X$ and $Y$ the tensor map
$$- \otimes_{\La} X \colon \Ext_{\Lae}^*(\La,\La) \to
\Ext_{\La \otimes_k \Gamma^{\op}}^*(X,X)$$ is a homomorphism of
graded $k$-algebras, making $\Ext_{\La \otimes_k
\Gamma^{\op}}^*(X,Y)$ a left and right
$\Ext_{\Lae}^*(\La,\La)$-module via the maps $- \otimes_{\La} Y$ and
$- \otimes_{\La} X$, respectively (followed by Yoneda product). Now
since $\La$ is projective as a $k$-module, it follows from
\cite[Proposition 3]{Yoneda} that for any two $\La$-$\La$ bimodules
$B$ and $B'$ which are both projective as right $\La$-modules, and
for any homogeneous elements $\eta \in \Ext_{\Lae}^*(B,B')$ and
$\theta \in \Ext_{\La \otimes_k \Gamma^{\op}}^*(X,Y)$, the Yoneda
relation
\begin{equation*}\label{Yonedaext}
(\eta \otimes_{\La} Y) \circ (B \otimes_{\La} \theta ) =
(-1)^{|\eta||\theta|} (B' \otimes_{\La} \theta ) \circ ( \eta
\otimes_{\La} X) \tag{$\dagger$}
\end{equation*}
holds (see also \cite[Theorem 1.1]{Snashall}). By specializing to
the case $\Gamma = \La$ and $B=B'=X=Y= \La$ we see that the
Hochshild cohomology ring $\HH^* ( \La, \La )$ of $\La$ is graded
commutative, whereas the case $\Gamma =k$ and $B=B'= \La$ shows that
for any pair of $\La$-modules $X$ and $Y$ the left and right scalar
actions from $\HH^* ( \La, \La )$ on $\Ext_{\La}^*(X,Y)$ are related
graded commutatively.

Denote the commutative even subalgebra $\bigoplus_{n=0}^{\infty}
\Ext_{\Lae}^{2n}(\La,\La)$ of $\HH^* ( \La, \La )$ by $H^{\ev}$. The
\emph{support variety} of the $\La$-module $M$ is the subset
$$\V(M) \stackrel{\text{def}}{=} \{ \m \in \Maxspec H^{\ev} \mid
\Ann_{H^{\ev}} \Ext_{\La}^*(M,M) \subseteq \m \}$$ of the maximal
ideal spectrum of $H^{\ev}$. As shown in \cite{Erdmann1} and
\cite{Snashall}, the theory of support varieties is rich and in many
ways similar to the theory of cohomological varieties for groups,
especially under the hypothesis that $H^{\ev}$ is Noetherian and
that $\Ext_{\La}^*(X,Y)$ is a finitely generated $H^{\ev}$-module
for all $\La$-modules $X$ and $Y$.

In order to obtain a partly similar theory of twisted support
varieties, the underlying geometric object we use is a ``twisted"
version of the Hochschild cohomology ring. Let $\Gamma$ be any ring
and let $\rho, \phi \colon \Gamma \to \Gamma$ be two ring
automorphisms. If $X$ is a left $\Gamma$-module and $B$ is a
$\Gamma$-$\Gamma$-bimodule, denote by $_{\phi}X$ and
$_{\phi}B_{\rho}$ the left module and bimodule whose scalar actions
are defined by $\gamma \cdot x = \phi ( \gamma )x$ and $\gamma_1
\cdot b \cdot \gamma_2 = \phi ( \gamma_1 )b \rho ( \gamma_2 )$ for
$\gamma, \gamma_1, \gamma_2 \in \Gamma$, $x \in X$, $b \in B$. The
functor assigning to each $\Gamma$-module $X$ the twisted module
$_{\rho}X$ is exact and isomorphic to the functor $_{\rho}\Gamma_1
\otimes_{\Gamma} -$, and it preserves projective modules and minimal
resolutions when the latter makes sense.

Fix a $k$-algebra automorphism
$$\psi \colon \La \to \La$$
and a positive integer $t \in \mathbb{N}$, and consider the graded
$k$-module
$$\HH^{t*}(_{\psi^*}\La_1, \La ) =
\Ext_{\Lae}^{t*}(_{\psi^*}\La_1, \La ) = \bigoplus_{n=0}^{\infty}
\Ext_{\Lae}^{tn}(_{\psi^n}\La_1, \La ).$$ By the above and Lemma
\ref{ring} the pairing
\begin{eqnarray*}
\Ext_{\Lae}^{tm}(_{\psi^m}\La_1, \La ) \times
\Ext_{\Lae}^{tn}(_{\psi^n}\La_1, \La ) & \to &
\Ext_{\Lae}^{t(m+n)}(_{\psi^{m+n}}\La_1, \La ) \\
( \eta, \theta ) & \mapsto & \eta \circ _{\psi^m}\theta
\end{eqnarray*}
defines a multiplication under which $\HH^{t*}(_{\psi^*}\La_1, \La
)$ becomes a graded $k$-algebra, i.e.\ if $\eta$ and $\theta$ are
homogeneous elements given as exact sequences
\begin{eqnarray*}
\eta \colon 0 \to \La \to E_{tm-1} \to \cdots \to E_0 \to
{_{\psi^m}\La_1} \to 0 \\
\theta \colon 0 \to \La \to T_{tn-1} \to \cdots \to T_0 \to
{_{\psi^n}\La_1} \to 0,
\end{eqnarray*}
then their product $\eta \theta$ is given as the Yoneda product of
the sequence $\eta$ with the sequence
$$_{\psi^m}\theta \colon 0 \to {_{\psi^m}\La_1} \to
{_{\psi^m}(T_{tn-1})_1} \to \cdots \to {_{\psi^m}(T_0)_1} \to
{_{\psi^{m+n}}\La_1} \to 0.$$

Before giving an example, recall that a finite dimensional algebra
$\Gamma$ over a field $K$ is \emph{Frobenius} if $_{\Gamma}\Gamma$
and $D(\Gamma_{\Gamma})$ are isomorphic as left $\Gamma$-modules,
where $D$ denotes the usual $K$-dual $\Hom_K(-,K)$. For such an
algebra $\Gamma$, let $\varphi_l \colon _{\Gamma}\Gamma \to
D(\Gamma_{\Gamma})$ be an isomorphism. Let $y \in \Gamma$ be any
element, and consider the linear functional $\varphi_l(1) \cdot y
\in D(\Gamma)$, i.e.\ the $K$-linear map $\Gamma \to K$ defined by
$\gamma \mapsto \varphi_l(1)(y \gamma)$. Since $\varphi_l$ is
surjective there is an element $x \in \Gamma$ having the property
that $\varphi_l(x) = \varphi_l(1) \cdot y$, giving $x \cdot
\varphi_l(1) = \varphi_l(1) \cdot y$ since $\varphi_l$ is a map of
left $\Gamma$-modules. It is not difficult to show that the map $y
\mapsto x$ defines a $K$-algebra automorphism on $\Gamma$, and its
inverse $\nu_{\Gamma}$ is called the \emph{Nakayama automorphism} of
$\Gamma$ (with respect to $\varphi_l$). Thus $\nu_{\Gamma}$ is
defined by $\varphi_l(1)(\gamma x) = \varphi_l(1)( \nu_{\Gamma}(x)
\gamma)$ for all $\gamma \in \Gamma$. This automorphism is unique up
to an inner automorphism; if $\varphi_l' \colon _{\Gamma}\Gamma \to
D(\Gamma_{\Gamma})$ is another isomorphism of left modules yielding
a Nakayama automorphism $\nu_{\Gamma}'$, then there exists an
invertible element $z \in \Gamma$ such that $\nu_{\Gamma} = z
\nu_{\Gamma}' z^{-1}$. Note that $\varphi_l$ is an isomorphism
between the bimodules $_1\Gamma_{\nu_{\Gamma}^{-1}}$ and
$D(\Gamma)$, and since $\nu_{\Gamma}^{-1} \colon
_{\nu_{\Gamma}}\Gamma_1 \to _1\Gamma_{\nu_{\Gamma}^{-1}}$ is an
isomorphism of bimodules we see that $_{\nu_{\Gamma}}\Gamma_1 \simeq
D(\Gamma)$.

As $D(\Gamma_{\Gamma})$ is an injective left $\Gamma$-module, we see
that a Frobenuis algebra is always left selfinjective, but in fact
the definition is left-right symmetric. For if $\varphi_l \colon
_{\Gamma}\Gamma \to D(\Gamma_{\Gamma})$ is an isomorphism of left
$\Gamma$-modules, we can dualize and get an isomorphism $D(
\varphi_l ) \colon D^2( \Gamma_{\Gamma} ) \to D( _{\Gamma}\Gamma )$
of right modules, and composing with the natural isomorphism
$\Gamma_{\Gamma} \simeq D^2( \Gamma_{\Gamma} )$ we get an
isomorphism $\varphi_r \colon \Gamma_{\Gamma} \to D(
_{\Gamma}\Gamma)$ of right $\Gamma$-modules. Moreover, we can view
this last map as an isomorphism $\varphi_r \colon
_{\Gamma^{\op}}\Gamma^{\op} \to D( \Gamma^{\op}_{\Gamma^{\op}} )$ of
left $\Gamma^{\op}$-modules, thereby giving an isomorphism
$\varphi_l \otimes \varphi_r \colon \Gamma \otimes_K \Gamma^{\op}
\to D(\Gamma_{\Gamma}) \otimes_K D( \Gamma^{\op}_{\Gamma^{\op}} )$
of $\Gamma$-$\Gamma$-bimodules. As $D(\Gamma_{\Gamma}) \otimes_K D(
\Gamma^{\op}_{\Gamma^{\op}} )$ is isomorphic to $D( \Gamma \otimes_K
\Gamma^{\op} )$ as a $\Gamma$-$\Gamma$-bimodule, we see that the
enveloping algebra $\Gamma^{\e} = \Gamma \otimes_K \Gamma^{\op}$ of
$\Gamma$ is also Frobenius.

\begin{example}
Suppose $k$ is a field and $\La$ is Frobenius. Recall from the
example in Section \ref{pre} that for a bimodule $B$ in the stable
category $\stmod \Lae$ the orbit algebra
$$\mathbb{A}(B, \tau_{\Lae}) = \Hom_{\Lae} (B,B) \oplus
\bigoplus_{i=1}^{\infty} \stHom_{\Lae} ( \tau_{\Lae}^i(B),B)$$ is
isomorphic to the algebra
$$\bigoplus_{i=0}^{\infty} \Ext_{\Lae}^{2i} ( \mathcal{N}^iB,B),$$
where $\mathcal{N}$ is the Nakayama functor $D \Hom_{\Lae}(-,
\Lae)$. This functor maps the projective cover of a simple
$\Lae$-module to its injective envelope, and is therefore isomorphic
as a functor to $D(\Lae) \otimes_{\Lae} -$. As $\La$ is a Frobenius
algebra, so is $\Lae$, and we know that $D(\Lae)$ is isomorphic to
$_{\nu_{\Lae}}\Lae_1$ as a $\Lae$-$\Lae$-bimodule, giving an
isomorphism
$$\mathbb{A}(B, \tau_{\Lae}) \simeq \bigoplus_{i=0}^{\infty}
\Ext_{\Lae}^{2i} ( _{\nu_{\Lae}^i}B,B)$$ of graded $k$-algebras.
\end{example}

Next we address the question of commutativity in
$\HH^{t*}(_{\psi^*}\La_1, \La )$. For two homogeneous elements
\begin{eqnarray*} \eta \colon 0
\to \La \to E_{tm-1} \to \cdots \to E_0 \to
{_{\psi^m}\La_1} \to 0 \\
\theta \colon 0 \to \La \to T_{tn-1} \to \cdots \to T_0 \to
{_{\psi^n}\La_1} \to 0
\end{eqnarray*}
the Yoneda relation (\ref{Yonedaext}) gives the equality
$$\eta \circ ( _{\psi^m}\La_1 \otimes_{\La} \theta ) =
(-1)^{|\eta||\theta|} \theta \circ ( \eta \otimes_{\La}
{_{\psi^n}\La_1} ),$$ in which the left hand side is the product
$\eta \theta$ (we can identify $_{\psi^m}\La_1 \otimes_{\La} \theta$
with $_{\psi^m}\theta$). However, the extension on the right hand
side is in general \emph{not} the product $\theta \eta$; it is the
Yoneda product of $\theta$ with $\eta_{\psi^{-n}}$, and although we
have bimodule isomorphisms $_1\La_{\psi^{-n}} \simeq
{_{\psi^n}\La_1}$ and $_{\psi^m}\La_{\psi^{-n}} \simeq
{_{\psi^{m+n}}\La_1}$ we cannot in general identify
$\eta_{\psi^{-n}}$ with $_{\psi^n}\eta$ unless $n=0$. In fact, the
following example shows that the algebra $\HH^{t*}(_{\psi^*}\La_1, \La
)$ in general is not graded commutative.

\begin{example}
Suppose $k$ is an algebraically closed field not of characteristic
$2$, and for any natural
numbers $m$ and $n$, with $n \geq 2$, let $Q_{m,n}$ be the quiver
$$\xymatrix@R=2pt@C=12pt{
& \cdot \ar[r]^{\alpha^1_2} & \cdots \ar[r]^{\alpha^1_{n-1}} & \cdot
\ar@/^/[rd]^<<{\alpha^1_n} && \cdot \ar[r]^{\alpha^2_2} & \cdots
\ar[r]^{\alpha^2_{n-1}} & \cdot \ar@/^/[rd]^{\alpha^2_n} &&&& \cdot
\ar[r]^{\alpha^m_2} & \cdots \ar[r]^{\alpha^m_{n-1}} & \cdot
\ar@/^/[rd]^{\alpha^m_n} \\
  1 \cdot \ar@/^/[ru]^{\alpha^1_1} \ar@/_/[rd]_{\beta^1_1} &&&& \cdot
\ar@/^/[ru]^>>{\alpha^2_1} \ar@/_/[rd]_>>{\beta^2_1} &&&& \cdot &
\cdots & \cdot \ar@/^/[ru]^{\alpha^m_1}
\ar@/_/[rd]_{\beta^m_1} &&&& 1 \cdot \\
& \cdot \ar[r]_{\beta^1_2} & \cdots \ar[r]_{\beta^1_{n-1}} & \cdot
\ar@/_/[ru]_<<{\beta^1_n} && \cdot \ar[r]_{\beta^2_2} & \cdots
\ar[r]_{\beta^2_{n-1}} & \cdot \ar@/_/[ru]_{\beta^2_n} &&&& \cdot
\ar[r]_{\beta^m_2} & \cdots \ar[r]_{\beta^m_{n-1}} & \cdot
\ar@/_/[ru]_{\beta^m_n} }$$
in which the leftmost vertex is the same as the rightmost
vertex. Denote by $\az_{m,n}$ the ideal in the path algebra $kQ_{m,n}$
generated by the paths of length $n+1$, the sums of the parallel paths
of length $n$, and the paths $\{ \beta^1_1 \alpha^m_n, \alpha^1_1
\beta^m_n, \beta^{i+1}_1 \alpha^i_n, \alpha^{i+1}_1 \beta^i_n
\}_{i=1}^{m-1}$. Finally, denote the finite dimensional algebra
$kQ_{m,n} / \az_{m,n}$ by $\La_{m,n}$. It was shown in
\cite{Riedtmann} and \cite{ScW} that the algebras $\left \{ \La_{m,n}
\right \}_{1 \leq m, 2 \leq n}$ represent the distinct stable
equivalence classes of M{\"o}bius algebras. Moreover, in
\cite{Asashiba} and \cite{M-H} it was shown that these algebras also
represent the distinct derived equivalent classes of M{\"o}bius
algebras.

Let $\La$ be an algebra of the above type. By \cite[Theorem 3.5]{EHS}
there is an automorphism $\La \xrightarrow{\psi} \La$ such that
$\Omega_{\Lae}^p( \La ) \simeq {_{\psi^{-1}}\La_1}$, where
$p=2n-1$. Thus the initial part of the minimal bimodule resolution of
$\La$ has the form
$$0 \to {_{\psi^{-1}}\La_1} \to Q_{p-1} \to \cdots \to Q_0 \to \La \to
0,$$
and twisting this exact sequence with $\psi$ we obtain the initial
part
$$0 \to \La \to {_{\psi}(Q_{p-1})_1} \to \cdots \to
{_{\psi}(Q_0)_1} \to {_{\psi}\La_1} \to 0$$
of the minimal bimodule resolution of ${_{\psi}\La_1}$. The latter
exact sequence corresponds to an element $\mu$ in $\HH^{p*}(_{\psi^*}\La_1, \La
)$, and if this algebra is graded commutative then the relation
$\mu^2 = (-1)^{p^2} \mu^2$ implies $\mu^2 =0$ (since $p$ is
odd). However, since $\Omega_{\Lae}^{2p}( {_{\psi^2}\La_1} ) = \La$,
we may represent the element $\mu^2 \in \Ext_{\Lae}^{2p}(
{_{\psi^2}\La_1}, \La)$ by the identity map $\Omega_{\Lae}^{2p}(
{_{\psi^2}\La_1} ) \xrightarrow{\text{id}} \La$, and so if $\mu^2
=0$ then the identity on $\La$ factors through a projective bimodule. Then
$\La$ itself is projective as a bimodule, and therefore semisimple as
an algebra. This is obviously impossible, hence the algebra
$\HH^{p*}(_{\psi^*}\La_1, \La )$ cannot be graded commutative.
\end{example}

The above example shows that the twisted Hochschild
cohomology ring in general is not
graded commutative. In light of this example and the Yoneda relation
(\ref{Yonedaext}), all we can say about
commutativity in $\HH^{t*}(_{\psi^*}\La_1, \La )$ is that a
homogeneous degree zero element commutes with everything, which is
not unexpected since $\HH^0(_{\psi^0}\La_1, \La ) = \Ext_{\Lae}^0(
\La, \La ) = \Z ( \La )$ (the center of $\La$). The next result
gives a criterion under which commutativity relations hold, but
first recall the following from \cite[page 174-175]{Cartan}: for
each $n \geq 0$ let $Q_n$ denote the $n$-fold tensor product of
$\La$ over $k$ (with $Q_0 =k$), and define
$$B^n = Q_{n+2} = \underbrace{\La \otimes_k \cdots \otimes_k
\La}_{n+2 \text{ copies of } \La}.$$ We give $B^n$ a $\Lae$-module
structure by defining
$$\lambda ( \lambda_0 \otimes \cdots \otimes \lambda_{n+1} )
\lambda' = \lambda \lambda_0 \otimes \cdots \otimes \lambda_{n+1}
\lambda',$$ and as $A^{\e}$-modules we then have an isomorphism $B^n
\simeq \Lae \otimes_k Q_n$. Note that $B^n$ is $\Lae$-projective
since the functor $\Hom_{\Lae} ( \Lae \otimes_k Q_n, - )$ is
naturally isomorphic to $\Hom_k ( Q_n, - )$ by adjointness. Now for
each $n \geq 1$, define $d \colon B^n \to B^{n-1}$ by
$$\lambda_0 \otimes \cdots \otimes \lambda_{n+1} \mapsto \sum_{i=0}^n
(-1)^i \lambda_0 \otimes \cdots \otimes \lambda_i \lambda_{i+1}
\otimes \cdots \otimes \lambda_{n+1}.$$ The sequence
$$\mathbb{B}: \cdots \to B^3 \xrightarrow{d} B^2
\xrightarrow{d} B^1 \xrightarrow{d} B^0 \xrightarrow{\mu} \La \to
0,$$ where $\mu$ is the ``multiplication map", is an exact
$\Lae$-projective resolution called the \emph{standard resolution}
(or \emph{Bar-resolution}) of $\La$.

\begin{proposition}\label{commutativity}
Let $\eta \in \HH^{t*}(_{\psi^*}\La_1, \La )$ be a homogeneous
element of degree $tm$ represented by the map $f_{\eta} \colon
_{\psi^m}B^{tm}_1 \to \La$. If for some $n \geq 1$
$$f_{\eta} ( \psi^{-n}( \lambda_0 ) \otimes \cdots \otimes
\psi^{-n}( \lambda_{tm} ) \otimes 1) = \psi^{-n}f_{\eta} ( \lambda_0
\otimes \cdots \otimes \lambda_{tm} \otimes 1)$$ for all $\lambda_0
\otimes \cdots \otimes \lambda_{tm} \otimes 1$ in
$_{\psi^m}B^{tm}_1$, then $\eta_{\psi^{-n}} = {_{\psi^n}\eta}$, and
consequently $\eta \theta = (-1)^{|\eta||\theta|} \theta \eta$ for
every homogeneous element $\theta \in \HH^{t*}(_{\psi^*}\La_1, \La
)$ such that $tn$ divides $|\theta|$. In particular, if
$$f_{\eta} ( \psi^{-1}( \lambda_0 ) \otimes \cdots \otimes
\psi^{-1}( \lambda_{tm} ) \otimes 1) = \psi^{-1}f_{\eta} ( \lambda_0
\otimes \cdots \otimes \lambda_{tm} \otimes 1)$$ for all $\lambda_0
\otimes \cdots \otimes \lambda_{tm} \otimes 1$ in
$_{\psi^m}B^{tm}_1$, then $\eta$ belongs to the graded center of
$\HH^{t*}(_{\psi^*}\La_1, \La )$.
\end{proposition}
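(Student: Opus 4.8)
The plan is to work entirely inside the bar resolution $\mathbb{B}$ of $\La$ and to track what the twisting functors $_{\psi}(-)_1$ and $(-)_{\psi}$ do to the chain maps representing $\eta$. The key observation is that $_{\psi^n}(B^s)_1$ is again a free module of the same rank as $B^s$ (the bar complex is built from tensor powers of $\La$, and twisting by an automorphism permutes a basis), so the bar resolution of $_{\psi^n}\La_1$ can be taken to be $_{\psi^n}\mathbb{B}_1$, and likewise the bar resolution of $\La_{\psi^{-n}}$ is $\mathbb{B}_{\psi^{-n}}$. Thus a lifting of $f_\eta$ along these two twisted resolutions gives explicit representatives for $_{\psi^n}\eta$ and $\eta_{\psi^{-n}}$ respectively. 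Concretely, $_{\psi^n}\eta$ is represented by the map $_{\psi^{m+n}}B^{tm}_1 \to {_{\psi^n}}\La_1$ which on elements is $\lambda_0\otimes\cdots\otimes\lambda_{tm}\otimes 1 \mapsto f_\eta(\lambda_0\otimes\cdots\otimes\lambda_{tm}\otimes 1)$ but viewed between the twisted modules, while $\eta_{\psi^{-n}}$ is represented by the map obtained by precomposing with the canonical identification $_{\psi^{m+n}}B^{tm}_1 \cong {_{\psi^m}}(B^{tm})_{\psi^{-n}}$ and reading off the induced map to $\La_{\psi^{-n}} \cong {_{\psi^n}}\La_1$. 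Spelling this out, one sees that $\eta_{\psi^{-n}}$ corresponds to the map $\lambda_0\otimes\cdots\otimes\lambda_{tm}\otimes 1 \mapsto f_\eta(\psi^{-n}(\lambda_0)\otimes\cdots\otimes\psi^{-n}(\lambda_{tm})\otimes 1)$, again suitably twisted, whereas $_{\psi^n}\eta$ corresponds to $\lambda_0\otimes\cdots\otimes\lambda_{tm}\otimes 1 \mapsto \psi^{-n}f_\eta(\lambda_0\otimes\cdots\otimes\lambda_{tm}\otimes 1)$ — and the two agree on the nose precisely under the displayed hypothesis.

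After establishing $\eta_{\psi^{-n}} = {_{\psi^n}}\eta$, the commutativity consequence follows from the Yoneda relation $(\dagger)$ as set up in the discussion preceding the proposition. Recall that for a homogeneous $\theta$ of degree $tn'$ with $n \mid n'$ — write $n' = n\ell$ — the relation $(\dagger)$ applied with $B = B' = X = Y$ appropriate twisted copies of $\La$ gives
$$\eta \circ ({_{\psi^m}}\La_1 \otimes_\La \theta) = (-1)^{|\eta||\theta|}\, \theta \circ (\eta \otimes_\La {_{\psi^{n'}}}\La_1).$$
The left-hand side is by definition $\eta\theta$. The right-hand side is the Yoneda product of $\theta$ with $\eta \otimes_\La {_{\psi^{n'}}}\La_1 = \eta_{\psi^{-n'}}$. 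Since $\eta_{\psi^{-n}} = {_{\psi^n}}\eta$ and twisting is functorial, iterating gives $\eta_{\psi^{-n'}} = {_{\psi^{n'}}}\eta$, so the right-hand side equals $\theta \circ {_{\psi^{n'}}}\eta = \theta\eta$. Hence $\eta\theta = (-1)^{|\eta||\theta|}\theta\eta$ as claimed. The final "in particular" is the special case $n = 1$: then the hypothesis holds for every $\theta$ whose degree is divisible by $t$, i.e. for every homogeneous $\theta$ in $\HH^{t*}({_{\psi^*}}\La_1,\La)$, which is exactly the statement that $\eta$ lies in the graded center.

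The main obstacle is purely bookkeeping: making rigorous the identification of the twisted bar complex $_{\psi^n}\mathbb{B}_1$ (resp. $\mathbb{B}_{\psi^{-n}}$) with a genuine projective resolution of $_{\psi^n}\La_1$ (resp. $\La_{\psi^{-n}}$), and then carefully matching the element-level formulas for the two lifted maps with the two sides of the displayed hypothesis, keeping the various $\psi^{\pm k}$ on the correct tensor factors and on the correct side. One must check that the differential $d$ is compatible with the twist — which it is, since $d$ only involves multiplication of adjacent factors and $\psi$ is an algebra map — so that $_{\psi^n}(-)_1$ carries $\mathbb{B}$ to a complex which is again exact and $\Lae$-projective, hence a valid resolution by the uniqueness of projective resolutions up to homotopy. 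Once that compatibility is in hand, identifying $_{\psi^n}\eta$ with $\eta_{\psi^{-n}}$ under the stated condition, and invoking $(\dagger)$, is routine; no genuinely new idea beyond the twisted-bar-complex observation is needed.
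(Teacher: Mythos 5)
Your proposal is correct and follows essentially the same route as the paper: lift the twist along the bar resolution via $(\psi^{-n})^{\otimes\,\bullet}$ to get an explicit representative of $\eta_{\psi^{-n}}$, observe that the displayed hypothesis makes it coincide with the representative of ${_{\psi^n}\eta}$, then iterate and apply the Yoneda relation $(\dagger)$. The only difference is cosmetic (you read the two representatives as maps into $\La_{\psi^{-n}}$ rather than composing with $\psi^n$ into ${_{\psi^n}\La_1}$), so nothing further is needed.
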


\begin{proof}
Suppose the given condition on $f_{\eta}$ holds for $n$. When
viewing $\eta_{\psi^{-n}}$ as a $tm$-fold extension of
${_{\psi^n}\La_1}$ by ${_{\psi^{m+n}}\La_1}$, we use the bimodule
isomorphisms $\psi^{-n} \colon {_{\psi^{m+n}}\La_1} \to
_{\psi^m}\La_{\psi^{-n}}$ and $\psi^{n} \colon _1\La_{\psi^{-n}} \to
_{\psi^n}\La_1$. A lifting of the former along
$_{\psi^{m+n}}\mathbb{B}_1$ is given by the maps
\begin{eqnarray*}
(\psi^{-n})^{\otimes i+2} \colon _{\psi^{m+n}}B^i_1 & \to &
_{\psi^m}B^i_{\psi^{-n}} \\
\lambda_0 \otimes \cdots \otimes \lambda_{i+1} & \mapsto & \psi^{-n}
( \lambda_0) \otimes \cdots \otimes \psi^{-n} ( \lambda_{i+1})
\end{eqnarray*}
for $i \geq 0$, giving the commutative diagram
$$\xymatrix{
\cdots \ar[r] & _{\psi^{m+n}}B^{tm}_1 \ar[d]^{(\psi^{-n})^{\otimes
tm+2}} \ar[r]^{d} & _{\psi^{m+n}}B^{tm-1}_1
\ar[d]^{(\psi^{-n})^{\otimes tm+1}} \ar[r]^{d} & \cdots \ar[r] &
{_{\psi^{m+n}}\La_1} \ar[d]^{\psi^{-n}} \ar[r] & 0 \\
\cdots \ar[r] & _{\psi^m}B^{tm}_{\psi^{-n}} \ar[r]^d
\ar[d]^{f_{\eta}} & _{\psi^m}B^{tm-1}_{\psi^{-n}} \ar[r]^d & \cdots
\ar[r] & _{\psi^m}\La_{\psi^{-n}}
\ar[r] & 0 \\
& _1\La_{\psi^{-n}} \ar[d]^{\psi^n} \\
& _{\psi^n}\La_1 }$$ with exact rows. Therefore the extension
$\eta_{\psi^{-n}}$ is represented by the map composite map
$$\psi^n \circ f_{\eta} \circ
(\psi^{-n})^{\otimes tm+2} \colon _{\psi^{m+n}}B^{tm}_1 \to
_{\psi^n}\La_1,$$ under which the image of an element $\lambda_0
\otimes \cdots \otimes \lambda_{tm+1} \in {_{\psi^{m+n}}B^{tm}_1}$
is easily seen to be $f_{\eta} ( \lambda_0 \otimes \cdots \otimes
\lambda_{tm+1} )$ because of the assumption on $f_{\eta}$.

This shows that we may identify the extensions $\eta_{\psi^{-n}}$
and ${_{\psi^n}\eta}$, and by induction we see that
$\eta_{\psi^{-jn}} = {_{\psi^{jn}}\eta}$ for all $j \geq 1$.
Consequently, if $\theta \in \HH^{t*}(_{\psi^*}\La_1, \La )$ is a
homogeneous element such that $tn$ divides $|\theta|$, say $|\theta|
= tnj$, then the Yoneda relation gives
$$\eta \theta = (-1)^{|\eta||\theta|} \theta \circ
\eta_{\psi^{-jn}} = (-1)^{|\eta||\theta|} \theta \circ
{_{\psi^{jn}}\eta} = (-1)^{|\eta||\theta|} \theta \eta.$$
\end{proof}

For reasons to be explained in the next section, we make the
following definition motivated by the above proposition.

\begin{definition}
A commutative graded subalgebra $H \subseteq
\HH^{t*}(_{\psi^*}\La_1, \La )$ is \emph{strongly commutative} if
$\eta_{\psi^{-1}} = {_{\psi}\eta}$ for all homogeneous elements
$\eta \in H$.
\end{definition}

We end this section with an example of a Frobenius algebra for which
the twisted Hochschild cohomology ring with respect to the Nakayama
automorphism and $t=2$ is strongly commutative.

\begin{example}
Let $k$ be a field not of characteristic $2$ and $q \in k$ a nonzero
element which is \emph{not} a root of unity, and denote by $\La$ the
$k$-algebra
$$\La = k \langle x,y \rangle / ( x^2, xy+qyx, y^2 ).$$
Denote by $D$ the usual $k$-dual $\Hom_k(-,k)$, and consider the map
$\varphi \colon _{\La}\La \to D( \La_{\La} )$ of left $\La$-modules
defined by
$$\varphi(1)( \alpha + \beta x + \gamma y + \delta yx )
\stackrel{\text{def}}{=} \delta.$$ It is easy to show that this is
an injective map and hence also an isomorphism since $\dim_k \La =
\dim_k D( \La )$, and therefore $\La$ is a Frobenius algebra by
definition. Straightforward calculations show that $x \cdot
\varphi(1) = \varphi(1) \cdot (-q^{-1}x)$ and $y \cdot \varphi(1) =
\varphi(1) \cdot (-qy)$, hence since $x$ and $y$ generate $\La$ over
$k$ we see that the Nakayama automorphism $\nu$ (with respect to
$\varphi$) is the degree preserving map defined by
$$x \mapsto -q^{-1}x, \hspace{3mm} y \mapsto -qy.$$

In \cite{Bergh2} the Hochschild cohomology of this $4$-dimensional
graded Koszul algebra was studied, and for every degree preserving
$k$-algebra automorphism $\psi \colon \La \to \La$ the cohomology
groups $\HH^* ( \La , {_1 \Lambda_{\psi}} ) = \Ext_{\Lae}^*( \La ,
{_1 \Lambda_{\psi}} )$ were computed. In particular, from
\cite[3.4(iv)]{Bergh2} we get
$$\dim_k \Ext_{\Lae}^{2n}( \La ,
{_1\Lambda_{\nu^n}} ) = \left \{ \begin{array}{ll}
                                    1 & \text{for } n \text{ even} \\
                                    0 & \text{for } n \text{ odd}
                                  \end{array} \right. $$
when $n>0$, whereas $\Ext_{\Lae}^0( \La , {_1\Lambda_{\nu^0}} ) = \Z
( \La )$ is two dimensional (the elements $1$ and $yx$ form a basis
for the center of $\La$). Moreover, it is not difficult to see that
$\Ext_{\Lae}^{2n}( \La , {_1\Lambda_{\nu^n}} )$ is isomorphic to
$\Ext_{\Lae}^{2n}( _{\nu^n}\La_1, \La )$. Therefore the algebra
$\HH^{4*}(_{\nu^{2*}}\La_1, \La ) = \bigoplus_{n=0}^{\infty}
\Ext_{\Lae}^{4n} ( _{\nu^{2n}}\La_1, \La )$ is two dimensional in
degree zero and one dimensional in the positive degrees.

We now recall the construction of the minimal bimodule projective
resolution of $\La$ from \cite{Buchweitz}. Define the elements
$$f^0_0 = 1, \hspace{3mm} f^1_0 = x, \hspace{3mm} f^1_1 = y, $$
$$f^n_{-1} =0= f^n_{n+1} \hspace{1mm} \text{ for each } n \geq 0,$$
and for each $n \geq 2$ define elements $\{f^n_i\}_{i=0}^n \subseteq
\underbrace{ \La \otimes_k \cdots \otimes_k \La}_{n \text{ copies}}$
inductively by
$$f^n_i = f^{n-1}_{i-1} \otimes y + q^i f^{n-1}_i \otimes x.$$
Denote by $F^n$ the $\Lae$-projective module $\bigoplus_{i=0}^n \La
\otimes_k f^n_i \otimes_k \La$, and by $\tilde{f}^n_i$ the element
$1 \otimes f^n_i \otimes 1 \in F^n$ (and $\tilde{f}^0_0 = 1 \otimes
1$). The set $\{ \tilde{f}^n_i \}_{i=0}^n$ generates $F^n$ as a
$\Lae$-module. Now define a map $\delta \colon F^n \to F^{n-1}$ by
$$\tilde{f}^n_i \mapsto \left [ x \tilde{f}^{n-1}_i + (-1)^n q^i
\tilde{f}^{n-1}_i x \right ] + \left [ q^{n-i} y
\tilde{f}^{n-1}_{i-1} + (-1)^n \tilde{f}^{n-1}_{i-1} y \right ].$$
It is shown in \cite{Buchweitz} that
$$( \mathbb{F}, \delta ) \colon \cdots \to F^{n+1}
\xrightarrow{\delta} F^n \xrightarrow{\delta} F^{n-1} \to \cdots$$
is a minimal $\Lae$-projective resolution of $\La$.

For each $m\geq 1$ consider the $\Lae$-linear map
\begin{eqnarray*}
g_{4m} \colon _{\nu^{2m}}F^{4m}_1 & \to & \La \\
\tilde{f}^{4m}_i & \mapsto & \left \{ \begin{array}{ll}
                               1 & \text{for } i=2m \\
                               0 & \text{for } i \neq 2m.
                                  \end{array} \right.
\end{eqnarray*}
The only generators in $_{\nu^{2m}}F^{4m+1}_1$ which can possibly
map to $\tilde{f}^{4m}_{2m}$ under the map $_{\nu^{2m}}F^{4m+1}_1
\xrightarrow{\delta} _{\nu^{2m}}F^{4m}_1$ are
$\tilde{f}^{4m+1}_{2m}$ and $\tilde{f}^{4m+1}_{2m+1}$, and so it
follows from the equalities
\begin{eqnarray*}
g_{4m} \circ \delta ( \tilde{f}^{4m+1}_{2m} ) &=& g_{4m} (
x\tilde{f}^{4m}_{2m} -q^{2m} \tilde{f}^{4m}_{2m}x ) \\
&=& g_{4m} \left ( [q^{2m}x] \cdot \tilde{f}^{4m}_{2m} -
\tilde{f}^{4m}_{2m} \cdot [q^{2m}x] \right ) \\
&=&0
\end{eqnarray*}
and
\begin{eqnarray*}
g_{4m} \circ \delta ( \tilde{f}^{4m+1}_{2m+1} ) &=& g_{4m} (
q^{2m}y\tilde{f}^{4m}_{2m} - \tilde{f}^{4m}_{2m}y ) \\
&=& g_{4m} \left ( y \cdot \tilde{f}^{4m}_{2m} -
\tilde{f}^{4m}_{2m} \cdot y \right ) \\
&=&0
\end{eqnarray*}
that $g_{4m}$ belongs to the kernel of the map $\delta^* \colon
\Hom_{\Lae} ( _{\nu^{2m}}F^{4m}_1, \La ) \to \Hom_{\Lae}(
_{\nu^{2m}}F^{4m+1}_1, \La )$. Moreover, there cannot exist a
$\Lae$-linear map $_{\nu^{2m}}F^{4m-1}_1 \to \La$ making the diagram
$$\xymatrix{
_{\nu^{2m}}F^{4m}_1 \ar[r]^{\delta} \ar[d]^{g_{4m}} &
_{\nu^{2m}}F^{4m-1}_1 \ar[dl] \\
\La }$$ commute, since $\delta$ is a map of graded degree $1$,
whereas the degree of $g_{4m}$ is $0$. This shows that $g_{4m}$
represents a generator for the one dimensional space
$\Ext_{\Lae}^{4m} ( _{\nu^{2m}}\La_1, \La )$. Note that if we
multiply this map with $yx$, then there \emph{does} exist a map
making the above diagram commute, hence whenever we multiply any
homogeneous element of positive degree in
$\HH^{4*}(_{\nu^{2*}}\La_1, \La )$ with the element $yx \in \HH^0 (
_{\nu^0}\La_1, \La)$ we get zero.

For each $0 \leq i \leq 4$ define $\Lae$-linear maps $\bar{g}_{4m+i}
\colon _{\nu^{2(m+1)}}F^{4m+i}_1 \to _{\nu^2}F^i_1$ by
\begin{eqnarray*}
\bar{g}_{4m} \colon \tilde{f}^{4m}_i & \mapsto & \left \{
   \begin{array}{ll}
   \tilde{f}^0_0 & \text{for } i=2m \\
   0 & \text{otherwise}
   \end{array} \right. \\
\bar{g}_{4m+1} \colon \tilde{f}^{4m+1}_i & \mapsto & \left \{
   \begin{array}{ll}
   q^{2m} \tilde{f}^1_0 & \text{for } i=2m \\
   \tilde{f}^1_1 & \text{for } i=2m+1 \\
   0 & \text{otherwise}
   \end{array} \right. \\
\bar{g}_{4m+2} \colon \tilde{f}^{4m+2}_i & \mapsto & \left \{
   \begin{array}{ll}
   q^{4m} \tilde{f}^2_0 & \text{for } i=2m \\
   q^{2m} \tilde{f}^2_1 & \text{for } i=2m+1 \\
   \tilde{f}^2_2 & \text{for } i=2m+2 \\
   0 & \text{otherwise}
   \end{array} \right. \\
\bar{g}_{4m+3} \colon \tilde{f}^{4m+3}_i & \mapsto & \left \{
   \begin{array}{ll}
   q^{4m} \tilde{f}^3_1 & \text{for } i=2m+1 \\
   q^{2m} \tilde{f}^3_2 & \text{for } i=2m+2 \\
   0 & \text{otherwise}
   \end{array} \right. \\
\bar{g}_{4m+4} \colon \tilde{f}^{4m+4}_i & \mapsto & \left \{
   \begin{array}{ll}
   q^{4m} \tilde{f}^4_2 & \text{for } i=2m \\
   0 & \text{otherwise}.
   \end{array} \right.
\end{eqnarray*}
Then for each $1 \leq i \leq 4$ the diagram
$$\xymatrix{
_{\nu^{2(m+1)}}F^{4m+i}_1 \ar[d]^{\bar{g}_{4m+i}} \ar[r]^{\delta} &
_{\nu^{2(m+1)}}F^{4m+i-1}_1
\ar[d]^{\bar{g}_{4m+i-1}} \\
_{\nu^2}F^i_1 \ar[r]^{\delta} & _{\nu^2}F^{i-1}_1  }$$ commutes,
hence the maps $\bar{g}_{4m+i}$ for $0 \leq i \leq 4$ provide a
lifting of $_{\nu^{2(m+1)}}F^{4m}_1 \xrightarrow{_{\nu^2}g_{4m}}
{_{\nu^2}\La_1}$ along the projective bimodule resolution
$_{\nu^{2(m+1)}}\mathbb{F}_1$ of $_{\nu^{2(m+1)}}\La_1$. Moreover,
the composition $g_4 \circ \bar{g}_{4m+4}$ equals the map
$q^{4m}g_{4m+4}$. Therefore, if we denote by $\theta$ the extension
in $\Ext_{\Lae}^4 ( _{\nu^2}\La_1, \La )$ corresponding to the map
$g_4$, we see that for each $m \geq 2$ the element $\theta^m$ is the
(nonzero) extension in $\Ext_{\Lae}^{4m} ( _{\nu^{2m}}\La_1, \La )$
corresponding to the  map $q^{4(1+2+ \cdots + m-1)}g_{4m}$.

As a result, we see that we have an isomorphism
$$\HH^{4*}(_{\nu^{2*}}\La_1, \La ) \simeq k[x] \times_k k$$
of graded $k$-algebras, with $x$ of degree $4$, and in particular
$\HH^{4*}(_{\nu^{2*}}\La_1, \La )$ is commutative. Moreover, it is
easily shown that we may identify the extensions $\theta_{\nu^{-2}}$
and ${_{\nu^2}\theta}$, and so $\HH^{4*}(_{\nu^{2*}}\La_1, \La )$ is
actually strongly commutative.
\end{example}

\section{Twisted support varieties}\label{var}

In order to obtain a theory of twisted support
varieties, the underlying geometric object we shall use is the
twisted Hochschild cohomology ring $\HH^{t*}(_{\psi^*}\La_1, \La ) =
\Ext_{\Lae}^{t*}(_{\psi^*}\La_1, \La )$ studied in the previous
section (where $\La \xrightarrow{\psi} \La$ is an automorphism and
$t \in \mathbb{N}$). Similarly as for $\HH^{t*}(_{\psi^*}\La_1, \La
)$, the graded $k$-module $\Ext_{\La}^{t*} ( _{\psi^*}M,M) =
\bigoplus_{n=0}^{\infty} \Ext_{\La}^{tn}(_{\psi^n}M,M)$ is a
$k$-algebra, and for each $\La$-module $N$ the graded $k$-module
$\Ext_{\La}^{t*} ( _{\psi^*}M,N) = \bigoplus_{n=0}^{\infty}
\Ext_{\La}^{tn}(_{\psi^n}M,N)$ becomes a graded right
$\Ext_{\La}^{t*} ( _{\psi^*}M,M)$-module by defining
$$\zeta \mu \stackrel{\text{def}}{=} \zeta \circ {_{\psi^m}\mu}$$
for $\zeta \in \Ext_{\La}^{tm}(_{\psi^m}M,N)$ and $\mu \in
\Ext_{\La}^{tn}(_{\psi^n}M,M)$. Moreover, the tensor map
\begin{eqnarray*}
- \otimes_{\La} M \colon \HH^{t*}(_{\psi^*}\La_1, \La ) & \to &
\Ext_{\La}^{t*} ( _{\psi^*}M,M) \\
\eta & \mapsto & \eta \otimes_{\La} M
\end{eqnarray*}
is a homomorphism of graded $k$-algebras, thus making
$\Ext_{\La}^{t*} ( _{\psi^*}M,N)$ a graded right
$\HH^{t*}(_{\psi^*}\La_1, \La )$-module. Similarly $\Ext_{\La}^{t*}
( _{\psi^*}M,N)$ becomes a graded left $\HH^{t*}(_{\psi^*}\La_1, \La
)$ via the homomorphism
\begin{eqnarray*}
- \otimes_{\La} N \colon \HH^{t*}(_{\psi^*}\La_1, \La ) & \to &
\Ext_{\La}^{t*} ( _{\psi^*}N,N) \\
\eta & \mapsto & \eta \otimes_{\La} N
\end{eqnarray*}
of algebras.

We now make the following assumption:

\begin{assumption}
Given the automorphism $\La \xrightarrow{\psi} \La$ and the integer
$t \in \mathbb{N}$, the graded subalgebra $H =
\bigoplus_{n=0}^{\infty} H^{tn}$ of $\HH^{t*}({_{\psi^*}\La_1}, \La
)$ is strongly commutative with $H^0 = \HH^0 ( \La, \La ) = \Z ( \La
)$.
\end{assumption}

Why do we restrict ourselves to strongly commutative algebras
instead of ``ordinary" commutative algebras? The answer is that we
want the bifunction $\Ext_{\La}^{t*}( _{\psi^*}-,-)$, which maps the
pair $(M,N)$ of $\La$-modules to the $H$-module $\Ext_{\La}^{t*}(
{_{\psi^*}M},N)$, to preserve maps. Namely, consider two homogeneous
elements $\zeta \in \Ext_{\La}^{tm}(_{\psi^m}M,N)$ and $\eta \in
\Ext_{\Lae}^{tn}({_{\psi^n}\La_1}, \La )$. By definition $\zeta
\cdot \eta$ is the extension $\zeta \circ {_{\psi^m}( \eta
\otimes_{\La} M )} = \zeta \circ ( {_{\psi^m}\eta} \otimes_{\La} M
)$, and if ${_{\psi^m}\eta} = \eta_{\psi^{-m}}$ we get
${_{\psi^m}\eta} \otimes_{\La} M = \eta_{\psi^{-m}} \otimes_{\La} M
= \eta \otimes_{\La} {_{\psi^m}M}$. Hence in this case we may
identify the right scalar product $\zeta \cdot \eta$ with $( \La
\otimes_{\La} \zeta ) \circ ( \eta \otimes_{\La} {_{\psi^m}M} )$,
and then by the Yoneda relation (\ref{Yonedaext}) from the previous
section we get
$$\zeta \cdot \eta = ( \La
\otimes_{\La} \zeta ) \circ ( \eta \otimes_{\La} {_{\psi^m}M} ) =
(-1)^{|\zeta||\eta|} ( \eta \otimes_{\La} N ) \circ (
{_{\psi^n}\La_1} \otimes_{\La} \zeta ).$$ However, the extension $(
\eta \otimes_{\La} N ) \circ ( {_{\psi^n}\La_1} \otimes_{\La} \zeta
)$ is precisely the left scalar product $\eta \cdot \zeta$, and so
if $H$ is strongly commutative we see that
$$\zeta \cdot \eta = (-1)^{|\zeta||\eta|} \eta \cdot \zeta$$
for all homogeneous elements $\eta \in H$ and $\zeta \in
\Ext_{\La}^{t*}(_{\psi^*}M,N)$.

The fact that the left and right scalar multiplications basically
coincide is what makes $\Ext_{\La}^{t*}( _{\psi^*}-,-)$ preserve
maps. To see this, let $f \colon M \to M'$ be a $\La$-homomorphism.
This map induces a homomorphism $\hat{f} \colon
\Ext_{\La}^{t*}(_{\psi^*}M',N) \to \Ext_{\La}^{t*}({_{\psi^*}M},N)$
of graded groups, under which the image of a homogeneous element
$$\theta \colon 0 \to N \to X_{tn} \to \cdots \to X_1 \to
{_{\psi^n}M'} \to 0$$ is the extension $\theta f$ given by the
commutative diagram
$$\xymatrix{
\theta f \colon 0 \ar[r] & N \ar[r] \ar@{=}[d] & X_{tn} \ar[r]
\ar@{=}[d] & \cdots \ar[r] & X_2 \ar[r] \ar@{=}[d] & Y \ar[r]
\ar[d] & {_{\psi^n}M} \ar[r] \ar[d]^f & 0 \\
\theta \colon 0 \ar[r] & N \ar[r] & X_{tn} \ar[r] & \cdots \ar[r] &
X_2 \ar[r] & X_1 \ar[r] & {_{\psi^n}M'} \ar[r] & 0 }$$ in which the
module $Y$ is a pullback. For a homogeneous element $\eta \in
H^{tm}$ we then get
\begin{eqnarray*}
\hat{f} ( \theta \cdot \eta ) & = & (-1)^{|\eta||\theta|} \hat{f} (
\eta \cdot \theta ) \\
& = & (-1)^{|\eta||\theta|} \hat{f} \left ( ( \eta \otimes_{\La} N )
\circ {_{\psi^m}\theta} \right ) \\
& = & (-1)^{|\eta||\theta|} ( \eta \otimes_{\La} N ) \circ
{_{\psi^m}( \theta f)} \\
& = & (-1)^{|\eta||\theta|} \eta \cdot \hat{f} ( \theta ) \\
& = & \hat{f} ( \theta ) \cdot \eta,
\end{eqnarray*}
showing $\hat{f}$ is a homomorphism of $H$-modules. Similarly
$\Ext_{\La}^{t*}( _{\psi^*}-,-)$ preserves maps in the second
argument.

\begin{remark}
It should also be noted that in many cases considering only
strongly commutative algebras is not a severe restriction. For
example, let $H$ be a commutative Noetherian graded subalgebra of
$\HH^{t*}({_{\psi^*}\La_1}, \La )$, as will be the case in most of
the results in this section. Then by \cite[Proposition 2.1]{Bergh1}
there exists an integer $w$ and a homogeneous element $\eta \in H$
of positive degree, say $|\eta| = tn$, such that the multiplication
map
$$H^{ti} \xrightarrow{\eta \cdot} H^{t(i+n)}$$
is injective for $i \geq w$. Now for any homogeneous element $\theta
\in H^{tm}$ the product $\theta \cdot \eta$ is by definition the
extension $\theta \circ {_{\psi^m}\eta} = ( \theta \otimes_{\La} \La
) \circ ( {_{\psi^m}\La_1} \otimes_{\La} \eta )$, which by the
Yoneda relation (\ref{Yonedaext}) from Section \ref{hoc} equals the
extension $(-1)^{|\eta||\theta|}( \La \otimes_{\La} \eta ) \circ (
\theta \otimes_{\La} {_{\psi^n}\La_1} ) = (-1)^{|\eta||\theta|} \eta
\circ \theta_{\psi^{-n}}$. However, the extension $\eta \circ
\theta_{\psi^{-n}}$ corresponds to the product $\eta \cdot
{_{\psi^{-n}}\theta_{\psi^{-n}}}$ in $H$, and so since $H$ is
commutative we get
$$\eta \cdot \theta = \theta \cdot \eta = (-1)^{|\eta||\theta|} \eta
\cdot {_{\psi^{-n}}\theta_{\psi^{-n}}}.$$ Now if $|\theta| \geq tw$
then $\theta = (-1)^{|\eta||\theta|}
{_{\psi^{-n}}\theta_{\psi^{-n}}}$ due to the injectivity of the
multiplication map induced by $\eta$, and twisting both sides in
this equality by $\psi^n$ from the left we get ${_{\psi^n}\theta} =
(-1)^{|\eta||\theta|} \theta_{\psi^{-n}}$.

The algebra $H$, being Noetherian, is generated over $H^0$ by a
finite set of homogeneous elements, say $\{ x_0, \dots, x_r \}$.
Suppose now that $k$ contains an infinite field. Then from the proof
of \cite[Theorem 2.5]{Bergh1} we see that we may choose the element
$\eta$ above such that $|\eta| = \lcm \{ |x_0|, \dots, |x_r| \}$ In
particular, if all the generators $x_0, \dots, x_r$ are of the same
degree, i.e.\ $|x_i|=t$, then the subalgebra $H^0 \oplus
\bigoplus_{i=w}^{\infty} H^{ti}$ of $H$ is strongly commutative.
\end{remark}

Having made all the necessary assumptions, we are now ready to
define twisted support varieties. For two $\La$-modules $X$ and $Y$,
denote by $\A_H^{\psi} (X,Y)$ the ideal $\Ann_H \Ext_{\La}^{t*} (
_{\psi^*}X, Y )$, i.e.\ the annihilator in $H$ of $\Ext_{\La}^{t*} (
_{\psi^*}X, Y )$. This ideal is graded since $\Ext_{\La}^{t*} (
_{\psi^*}X, Y )$ is a graded $H$-module. As for ordinary support
varieties, the defining ideal for the twisted support variety of $M$
is $\A_H^{\psi}(M,\La / \ra )$.

\begin{definition}\label{twistedvarieties}
The \emph{twisted support variety} $\V_H^{\psi} (M)$ of $M$ (with
respect to the automorphism $\psi$, the integer $t$ and the ring
$H$) is the subset
$$\V_H^{\psi} (M) \stackrel{\text{def}}{=}\{ \m \in \Maxspec H \mid
\A_H^{\psi}(M,\La / \ra ) \subseteq \m \}$$ of the maximal ideal
spectrum of $H$.
\end{definition}

Note that since $M$ and therefore also $\Ext_{\La}^{t*} (
_{\psi^*}M, \La / \ra )$ is nonzero, the degree zero part of
$\A_H^{\psi}(M,\La / \ra )$ must be contained in the radical $\rad
H^0$ of the local ring $H^0= \Z ( \La )$ (if not then
$\A_H^{\psi}(M,\La / \ra )$ contains the identity in $H$). Hence the
twisted variety $\V_H^{\psi} (M)$ always contains the unique graded
maximal ideal
$$\m_H = \rad H^0 \oplus H^{t} \oplus H^{2t} \oplus \cdots$$ of $H$,
and we shall say the variety is \emph{trivial} if it does not
contain any other element.

The following proposition lists some elementary and more or less
expected properties of twisted varieties.

\begin{proposition}\label{properties}
\begin{enumerate}
\item[(i)] The ideal $\A_H^{\psi}(M,M)$ is also a defining ideal for
the twisted variety of $M$, i.e.\
$$\V_H^{\psi} (M) = \{ \m \in \Maxspec H \mid \A_H^{\psi}(M,M)
\subseteq \m \}.$$
\item[(ii)] If $\Ext_{\La}^{tn} ( _{\psi^n}M, M)=0$ for $n \gg 0$,
then $\V_H^{\psi} (M)$ is trivial. In particular, the twisted
variety of $M$ is trivial whenever the projective or injective
dimension of $M$ is finite.
\item[(iii)] Given nonzero $\La$-modules $M'$ and $M''$ and an exact
sequence
$$0 \to M' \to M \to M'' \to 0,$$ the relation
$$\V_H^{\psi} (M) \subseteq \V_H^{\psi} (M') \cup
\V_H^{\psi} (M'')$$ holds. Moreover, if $\Omega_{\La}^{t-1}(M') \neq
0$, then the relation
$$\V_H^{\psi} (M'') \subseteq \V_H^{\psi} (M) \cup \V_H^{\psi} (
\Omega_{\La}^{t-1}( _{\psi}M'))$$ holds, and if $M''=
\Omega_{\La}^{t-1}(M''')$ for some module $M'''$, then the relation
$$\V_H^{\psi} (M') \subseteq \V_H^{\psi} (M) \cup
\V_H^{\psi} ( _{\psi^{-1}}M''')$$ holds.
\item[(iv)] If $M'$ and $M''$ are nonzero $\La$-modules such that
$M=M' \oplus M''$, then $\V_H^{\psi} (M) = \V_H^{\psi} (M') \cup
\V_H^{\psi} (M'')$.
\end{enumerate}
\end{proposition}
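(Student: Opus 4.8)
The plan is to prove each of the four parts by reducing everything to standard annihilator/ideal-containment manipulations, exactly as in the untwisted theory, using the module structure established before the Definition.

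For (i), the inclusion $\A_H^{\psi}(M,M) \subseteq \A_H^{\psi}(M,\La/\ra)$ is immediate once one observes that $\Ext_{\La}^{t*}(_{\psi^*}M,\La/\ra)$ is a subquotient (via the obvious maps induced by a projection $P_0 \twoheadrightarrow M$ and the inclusion of the top) built functorially out of $\Ext_{\La}^{t*}(_{\psi^*}M,M)$-type data; more precisely, I would argue that any element of $H$ annihilating $\Ext_{\La}^{t*}(_{\psi^*}M,M)$ also annihilates $\Ext_{\La}^{t*}(_{\psi^*}M,N)$ for every $N$, since the $H$-action factors through $- \otimes_{\La} M$ followed by composition with a map into $N$. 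For the reverse containment I would show that if $\m \supseteq \A_H^{\psi}(M,\La/\ra)$ then $\m \supseteq \A_H^{\psi}(M,M)$: this is the one direction with content, and the standard device is to use that $\Ext_{\La}^{t*}(_{\psi^*}M,M)$ is, up to radicals of annihilators, controlled by $\Ext_{\La}^{t*}(_{\psi^*}M,\La/\ra)$ because every nonzero $\La$-module has $\La/\ra$ as a composition factor, so a long exact sequence / dimension-shift argument forces $\operatorname{rad}\A_H^{\psi}(M,M) = \operatorname{rad}\A_H^{\psi}(M,\La/\ra)$; since varieties only see radicals of ideals, this suffices.

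For (ii), if $\Ext_{\La}^{tn}(_{\psi^n}M,M) = 0$ for $n \gg 0$ then $H^{tn} \cdot \operatorname{(anything)} \subseteq \A_H^{\psi}(M,M)$ for large $n$ via the action map, so $\A_H^{\psi}(M,M)$ contains $\m_H^{\,j}$ for some $j$ and hence $\sqrt{\A_H^{\psi}(M,M)} = \m_H$, giving triviality by part (i). The finite projective dimension case is the observation that then $_{\psi^n}M$ has bounded projective resolutions uniformly in $n$ (the twist preserves projective resolutions, as recorded in Section \ref{hoc}), so the $\Ext$ groups vanish in high degree; the finite injective dimension case is dual, using $\Ext_{\La}^{tn}(_{\psi^n}M,M) \cong \Ext_{\La}^{tn}(M, _{\psi^{-n}}M)$ and that injective dimension is preserved under twisting.

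For (iii), each containment comes from a long exact sequence in $\Ext_{\La}^{t*}(_{\psi^*}-,\La/\ra)$ or $\Ext_{\La}^{t*}(-,\La/\ra)$ respectively — but here is the subtlety and \textbf{the main obstacle}: applying $\Ext_{\La}^{tn}(_{\psi^n}(-),\La/\ra)$ to $0 \to M' \to M \to M'' \to 0$ twisted by $\psi^n$ gives a long exact sequence whose connecting maps shift degree by $1$, not by $t$, so it does not directly split into a sequence of $H$-modules; the fix is to use the standard trick that $\Omega_{\La}^{t-1}(_{\psi}M')$ appears precisely because one must take a $(t{-}1)$-fold syzygy to realign the grading into multiples of $t$ — this is exactly why the hypotheses $\Omega_{\La}^{t-1}(M') \neq 0$ and $M'' = \Omega_{\La}^{t-1}(M''')$ are imposed. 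Concretely, I would combine the short exact sequence with the syzygy sequences $0 \to \Omega_{\La}(X) \to P \to X \to 0$ $(t{-}1)$ times to produce, after twisting, exact sequences of $\La^{\e}$- and $\La$-modules realizing the three claimed triangle-like relations; then $\A_H^{\psi}$ of the middle term contains the product (hence the intersection, after radicals) of the $\A_H^{\psi}$'s of the outer terms, which is exactly $\V_H^{\psi}(M) \subseteq \V_H^{\psi}(M') \cup \V_H^{\psi}(M'')$, and cyclically for the other two. The bookkeeping of which twist $\psi^{\pm 1}$ lands on which syzygy is the delicate part and must be done carefully using $_{\psi}\Omega_{\La}(X) \cong \Omega_{\La}(_\psi X)$.

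For (iv) the additivity is formal: $\Ext_{\La}^{t*}(_{\psi^*}(M'\oplus M''),\La/\ra) \cong \Ext_{\La}^{t*}(_{\psi^*}M',\La/\ra) \oplus \Ext_{\La}^{t*}(_{\psi^*}M'',\La/\ra)$ as $H$-modules, so $\A_H^{\psi}(M,\La/\ra) = \A_H^{\psi}(M',\La/\ra) \cap \A_H^{\psi}(M'',\La/\ra)$, and the variety of an intersection of ideals is the union of the varieties. Throughout, the only place the strong commutativity of $H$ is genuinely used is in (iii), where the long exact sequences must be sequences of $H$-modules — this is guaranteed by the discussion immediately preceding Definition \ref{twistedvarieties} that $\Ext_{\La}^{t*}(_{\psi^*}-,-)$ preserves maps — and in (i) for the same reason.
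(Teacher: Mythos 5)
Your proposal is correct and takes essentially the paper's route: for (i) the factoring of the $H$-action through $- \otimes_{\La} M$ together with a composition-series/long-exact-sequence induction (the paper's induction on the length of the second argument), for (ii) the vanishing of the target of that factoring forcing the annihilator up to radical to be $\m_H$, for (iii) degreewise exact sequences of $H$-modules with the $(t-1)$-fold syzygy identifications and the twist bookkeeping $\Omega_{\La}({_{\psi}X}) \simeq {_{\psi}\Omega_{\La}(X)}$ exactly as in the paper, and for (iv) additivity of annihilators over direct sums. The only slips are cosmetic: in (i) the direction you call ``the one with content'' ($\m \supseteq \A_H^{\psi}(M,\La/\ra) \Rightarrow \m \supseteq \A_H^{\psi}(M,M)$) is in fact the immediate one, following from $\A_H^{\psi}(M,M) \subseteq \A_H^{\psi}(M,\La/\ra)$, while the nontrivial reverse direction is precisely what your radical-equality argument via composition factors supplies; and in (iii) the first inclusion needs no syzygy realignment at all, since the three-term sequences in each degree $tn$ involve no connecting maps.
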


\begin{proof}
For a $\La$-module $N$, denote by $\V_H^{\psi} (M,N)$ the variety
whose defining ideal is $\A_H^{\psi}(M,N)$. To prove (i), we must
show that $\V_H^{\psi}(M, \La / \ra )$ and $\V_H^{\psi}(M,M)$ are
equal, and we start by proving by induction on the length of $N$
that $\V_H^{\psi} (M,N)$ is contained in $\V_H^{\psi} (M, \La / \ra
)$. For any simple $\La$-module $S$ the ideal $\A_H^{\psi}(M, \La /
\ra )$ is contained in $\A_H^{\psi}(M,S)$ since $S$ is a summand of
$\La / \ra$, hence $\V_H^{\psi} (M,S) \subseteq \V_H^{\psi}(M, \La /
\ra )$. If the length of $N$ is greater than $2$, take any simple
submodule $S \subset N$ and consider the exact sequence
$$0 \to S \to N \to N/S \to 0.$$
This sequence induces the exact sequence
$$\Ext_{\La}^{t*}( _{\psi^*}M,S) \to \Ext_{\La}^{t*}( _{\psi^*}M,N)
\to \Ext_{\La}^{t*}( _{\psi^*}M,N/S)$$ of $H$-modules, from which
the inclusion
$$\A_H^{\psi}(M,S) \cdot \A_H^{\psi}(M,N/S) \subseteq
\A_H^{\psi}(M,N)$$ follows. This implies that $\V_H^{\psi} (M,N)
\subseteq \V_H^{\psi} (M,S) \cup \V_H^{\psi} (M,N/S)$, and since
both $\V_H^{\psi} (M,S)$ and $\V_H^{\psi} (M,N/S)$ are contained in
$\V_H^{\psi} (M, \La / \ra )$ by induction, we get $\V_H^{\psi}
(M,N) \subseteq \V_H^{\psi} (M, \La / \ra )$. In particular
$\V_H^{\psi} (M,M)$ is contained in $\V_H^{\psi} (M, \La / \ra )$.
The reverse inclusion $\V_H^{\psi} (M, \La / \ra ) \subseteq
\V_H^{\psi} (M,M)$ follows from the inclusion $\A_H^{\psi}(M,M)
\subseteq \A_H^{\psi}(M, \La / \ra )$ of ideals in $H$, thus proving
(i).

Now suppose $\Ext_{\La}^{tn} ( _{\psi^n}M, M)=0$ for $n \gg 0$, let
$\m \subset H$ be a maximal ideal containing $\A_H^{\psi}(M, \La /
\ra )$, and let $x \in H^+ = \bigoplus_{n=1}^{\infty}H^{tn}$ be a
homogeneous element. As the scalar action from $H$ on
$\Ext_{\La}^{t*} ( _{\psi^*}M, \La / \ra )$ factors through
$\Ext_{\La}^{t*} ( _{\psi^*}M, M)$, some power of $x$ must lie in
$\A_H^{\psi}(M, \La / \ra )$. Therefore $x$ must be an element of
$\m$, implying $\m$ is a graded ideal. But $\m_H$ is the only graded
maximal ideal of $H$. This proves (ii).

Next suppose we are given an exact sequence as in (iii). The
sequence induces the exact sequence
$$\Ext_{\La}^{t*}( _{\psi^*}M'', \La / \ra ) \to \Ext_{\La}^{t*}(
_{\psi^*}M, \La / \ra ) \to \Ext_{\La}^{t*}( _{\psi^*}M', \La /
\ra )$$ of $H$-modules, giving the inclusion
$$\A_H^{\psi}(M',\La / \ra ) \cdot \A_H^{\psi}(M'',\La / \ra )
\subseteq \A_H^{\psi}(M,\La / \ra )$$ of ideals of $H$. The relation
$\V_H^{\psi} (M) \subseteq \V_H^{\psi} (M') \cup \V_H^{\psi} (M'')$
now follows.

The original exact sequence also induces the two exact sequences
\begin{eqnarray*}
& 0 \to \Hom_{\La} (M'', \La / \ra ) \to \Hom_{\La} (M, \La / \ra
) & \\
& \Ext_{\La}^{tn-1}( _{\psi^n}M', \La / \ra ) \to \Ext_{\La}^{tn}(
_{\psi^n}M'', \La / \ra ) \to \Ext_{\La}^{tn}( _{\psi^n}M, \La /
\ra ) &
\end{eqnarray*}
for $n \geq 1$, in which we may identify $\Ext_{\La}^{tn-1}(
_{\psi^n}M', \La / \ra )$ with $\Ext_{\La}^{t(n-1)}(
_{\psi^{n-1}}\Omega_{\La}^{t-1}(_{\psi}M'), \La / \ra )$.
Consequently the inclusion
$$\A_H^{\psi}(M,\La / \ra ) \cdot \A_H^{\psi}(\Omega_{\La}^{t-1}(
_{\psi}M'),\La / \ra ) \subseteq \A_H^{\psi}(M'',\La / \ra )$$ holds
whenever $\Omega_{\La}^{t-1}(M') \neq 0$, giving $\V_H^{\psi} (M'')
\subseteq \V_H^{\psi} (M) \cup \V_H^{\psi} ( \Omega_{\La}^{t-1}(
_{\psi}M'))$.

Finally, the short exact sequence induces the exact sequence
$$\Ext_{\La}^{tn}( _{\psi^n}M, \La / \ra ) \to \Ext_{\La}^{tn}(
_{\psi^n}M', \La / \ra ) \to \Ext_{\La}^{tn+1}( _{\psi^n}M'', \La /
\ra )$$ for $n \geq 0$. If $M''= \Omega_{\La}^{t-1}(M''')$ for some
module $M'''$, then we may identify $\Ext_{\La}^{tn+1}(
_{\psi^n}M'', \La / \ra )$ with $\Ext_{\La}^{t(n+1)}(
_{\psi^{n+1}}(_{\psi^{-1}}M'''), \La / \ra )$, and the inclusion
$$\A_H^{\psi}(M,\La / \ra ) \cdot \A_H^{\psi}( _{\psi^{-1}}M''',\La
/ \ra ) \subseteq \A_H^{\psi}(M',\La / \ra )$$ holds. This gives
$\V_H^{\psi} (M') \subseteq \V_H^{\psi} (M) \cup \V_H^{\psi} (
_{\psi^{-1}}M''')$, and the proof of (iii) is complete.

To prove (iv), note that by (iii) the inclusion $\V_H^{\psi} (M)
\subseteq \V_H^{\psi} (M') \cup \V_H^{\psi} (M'')$ holds, whereas
the inclusions $\A_H^{\psi}(M,\La / \ra ) \subseteq
\A_H^{\psi}(M',\La / \ra )$ and $\A_H^{\psi}(M,\La / \ra ) \subseteq
\A_H^{\psi}(M'',\La / \ra )$ give $\V_H^{\psi} (M') \subseteq
\V_H^{\psi} (M)$ and $\V_H^{\psi} (M'') \subseteq \V_H^{\psi} (M)$.
\end{proof}

As a corollary, we obtain a result whose analogue in the theory of
support varieties says that varieties are invariant under the
syzygy operator.

\begin{corollary}\label{syzygy}
If $\Omega_{\La}^t(M)$ is nonzero, then $\V_H^{\psi} (M) =
\V_H^{\psi} ( \Omega_{\La}^t ( _{\psi}M))$.
\end{corollary}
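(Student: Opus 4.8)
The plan is to obtain both inclusions of the equality $\V_H^{\psi}(M) = \V_H^{\psi}(\Omega_{\La}^t({_{\psi}M}))$ from the containments proved in Proposition~\ref{properties}(iii), applied to two short exact sequences read off from the minimal projective resolution $\mathbb{P}$ of $M$ and its $\psi$-twist. Throughout I will use three facts: the twisting functor $N \mapsto {_{\psi}N}$ is exact, carries projectives to projectives and minimal resolutions to minimal resolutions, so that $\Omega_{\La}^j({_{\psi}N}) \simeq {_{\psi}(\Omega_{\La}^j N)}$ for every $j \geq 0$; one has ${_{\psi^{-1}}({_{\psi}N})} \simeq N$; and $\V_H^{\psi}(P) = \{\m_H\}$ for every nonzero projective $P$ by Proposition~\ref{properties}(ii). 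Since $\m_H$ belongs to every twisted variety, any union with such a $\V_H^{\psi}(P)$ may simply be dropped.

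To prove $\V_H^{\psi}(M) \subseteq \V_H^{\psi}(\Omega_{\La}^t({_{\psi}M}))$ I would apply the second containment of Proposition~\ref{properties}(iii) to the exact sequence $0 \to \Omega_{\La}(M) \to P_0 \to M \to 0$, with $M' = \Omega_{\La}(M)$, middle term $P_0$, and $M'' = M$. Here $\Omega_{\La}^{t-1}(M') = \Omega_{\La}^{t-1}(\Omega_{\La}(M)) \simeq \Omega_{\La}^t(M)$, which is nonzero by hypothesis, so the containment applies and yields $\V_H^{\psi}(M) \subseteq \V_H^{\psi}(P_0) \cup \V_H^{\psi}(\Omega_{\La}^{t-1}({_{\psi}\Omega_{\La}(M)}))$. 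Rewriting $\Omega_{\La}^{t-1}({_{\psi}\Omega_{\La}(M)}) \simeq \Omega_{\La}^{t-1}(\Omega_{\La}({_{\psi}M})) \simeq \Omega_{\La}^t({_{\psi}M})$ and discarding the projective term gives the desired inclusion.

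For the reverse inclusion I would twist the exact sequence $0 \to \Omega_{\La}^t(M) \to P_{t-1} \to \Omega_{\La}^{t-1}(M) \to 0$ by $\psi$, obtaining $0 \to \Omega_{\La}^t({_{\psi}M}) \to {_{\psi}P_{t-1}} \to \Omega_{\La}^{t-1}({_{\psi}M}) \to 0$, and apply the third containment of Proposition~\ref{properties}(iii) with $M' = \Omega_{\La}^t({_{\psi}M})$, middle term ${_{\psi}P_{t-1}}$, and $M'' = \Omega_{\La}^{t-1}({_{\psi}M})$. Since $M'' = \Omega_{\La}^{t-1}(M''')$ for the module $M''' = {_{\psi}M}$, the containment gives $\V_H^{\psi}(\Omega_{\La}^t({_{\psi}M})) \subseteq \V_H^{\psi}({_{\psi}P_{t-1}}) \cup \V_H^{\psi}({_{\psi^{-1}}({_{\psi}M})}) = \V_H^{\psi}(M)$, using ${_{\psi^{-1}}({_{\psi}M})} \simeq M$ and again discarding the projective term. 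Combining the two inclusions completes the proof.

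The remaining points are purely bookkeeping. One must check that every module handed to Proposition~\ref{properties}(iii) is nonzero: this holds because $\Omega_{\La}^t(M) \neq 0$ forces $\Omega_{\La}^j(M) \neq 0$ and $P_j \neq 0$ for all $0 \leq j \leq t-1$, and twisting by $\psi$ preserves nonvanishing. One should also confirm the argument reads correctly when $t = 1$, where $\Omega_{\La}^{t-1} = \Omega_{\La}^0 = \Id$. I do not expect a genuine obstacle here: the statement is essentially formal once Proposition~\ref{properties} is in hand, and the only real decision is to choose the two short exact sequences so that the $(t-1)$-fold syzygy shift built into the second and third containments of~(iii) lands exactly on $\Omega_{\La}^t({_{\psi}M})$.
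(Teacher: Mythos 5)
Your proposal is correct and follows essentially the same route as the paper: both inclusions are obtained from Proposition \ref{properties}(iii) applied to the exact sequences $0 \to \Omega_{\La}^1(M) \to P_0 \to M \to 0$ and $0 \to \Omega_{\La}^t({_{\psi}M}) \to {_{\psi}P_{t-1}} \to \Omega_{\La}^{t-1}({_{\psi}M}) \to 0$, discarding the trivial varieties of the projective middle terms. The extra bookkeeping you mention (nonvanishing of the relevant syzygies and compatibility of twisting with $\Omega_{\La}$) is exactly what the paper uses implicitly.
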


\begin{proof}
The exact sequence
$$0 \to \Omega_{\La}^1(M) \to P_0 \to M \to 0$$
gives $\V_H^{\psi} (M) \subseteq \V_H^{\psi} (P_0) \cup
\V_H^{\psi} ( \Omega_{\La}^{t-1}( _{\psi}\Omega_{\La}^1(M)))$, and
since $\V_H^{\psi} (P_0)$ is trivial we get $\V_H^{\psi} (M)
\subseteq \V_H^{\psi} ( \Omega_{\La}^t ( _{\psi}M))$. On the other
hand, the exact sequence
$$0 \to \Omega_{\La}^t( _{\psi}M) \to
{_{\psi}P_{t-1}} \to \Omega_{\La}^{t-1}( _{\psi}M) \to 0$$ gives
$\V_H^{\psi} (\Omega_{\La}^t( _{\psi}M)) \subseteq \V_H^{\psi}(
_{\psi}P_{t-1}) \cup \V_H^{\psi}( _{\psi^{-1}}( _{\psi}M))$, and
since $\V_H^{\psi}( _{\psi}P_{t-1})$ is trivial we get $\V_H^{\psi}
(\Omega_{\La}^t( _{\psi}M)) \subseteq \V_H^{\psi} (M)$.
\end{proof}

We illustrate this last result with an example.

\begin{example}
Suppose $k$ is a field and $\La$ is a Frobenius algebra, let $\psi =
\nu_{\La}$ be the Nakayama automorphism of $\La$, and take $t=2$. We
saw in the example following Lemma \ref{ring} that the
Auslander-Reiten translation $\tau = D \Tr$ is isomorphic to
$\Omega_{\La}^2 \mathcal{N}$, where $\mathcal{N}$ is the Nakayama
functor $D \Hom_{\La}(-, \La)$. Moreover, in the example prior to
Proposition \ref{commutativity} we saw that the latter is isomorphic
to $_{\nu}\La_1 \otimes_{\La} -$. Therefore, from the corollary
above we get
$$\V_H^{\nu}(M) = \V_H^{\nu}( \Omega_{\La}^2 ( _{\nu}M)) =
\V_H^{\nu}( \Omega_{\La}^2 ( _{\nu}\La_1 \otimes_{\La} M)) =
\V_H^{\nu}( \tau (M))$$ whenever $\Omega_{\La}^2 (M)$ is nonzero.
\end{example}

A fundamental feature within the theory of support varieties for
modules over both group algebras of finite groups and complete
intersections is the finite generation of $\Ext^*(X,Y)$ (where $X$
and $Y$ are modules over the ring in question) as a module over the
commutative Noetherian graded ring of cohomological operators (see
\cite{Benson} and \cite{Carlson} for the group ring case and
\cite{Avramov1} and \cite{Avramov2} for the complete intersection
case). In order to obtain a similar theory for selfinjective
algebras in \cite{Erdmann1}, it was necessary to \emph{assume} that
the same finite generation hypothesis held, since there exist
selfinjective algebras for which it does not hold.

We now introduce a finite generation hypothesis similar to that used
in \cite{Bergh1}, where instead of assuming finite generation of
$\Ext^*(X,Y)$ for \emph{all} modules $X$ and $Y$, a local variant
focusing on a single module was used.

\begin{assumption}[\bf{Fg($M,H,\psi,t$)}]\label{fg}
Given the automorphism $\La \xrightarrow{\psi} \La$ and the integer
$t \in \mathbb{N}$, there exists a strongly commutative Noetherian
graded subalgebra $H = \bigoplus_{n=0}^{\infty} H^{tn}$ of
$\HH^{t*}(_{\psi^*}\La_1, \La )$ such that $H^0 = \HH^0 ( \La, \La )
= \Z ( \La )$, and with the property that $\Ext_{\La}^{t*} (
_{\psi^*}M, \La / \ra )$ is a finitely generated $H$-module.
\end{assumption}

As mentioned in the remark prior to Definition
\ref{twistedvarieties}, considering only strongly commutative
algebras instead of ``ordinary" commutative algebras is not a severe
restriction. In fact, the following result shows that if
{\bf{Fg($M,H,\psi,t$)}} holds for a commutative Noetherian graded
algebra $H$ which is not necessarily strongly commutative, then
there exist a positive integer $s$ and a strongly commutative
Noetherian subalgebra $H' \subseteq H$ for which
{\bf{Fg($M,H',\psi^s,ts$)}} holds.

\begin{proposition}\label{commutativeisstrong}
Let $\La \xrightarrow{\psi} \La$ be an automorphism and $t \in
\mathbb{N}$ an integer, and suppose there exists a commutative
Noetherian graded subalgebra $H = \bigoplus_{n=0}^{\infty} H^{tn}$
of $\HH^{t*}(_{\psi^*}\La_1, \La )$ such that $H^0 = \HH^0 ( \La,
\La ) = \Z ( \La )$, and with the property that $\Ext_{\La}^{t*} (
_{\psi^*}M, \La / \ra )$ is a finitely generated $H$-module. Then
there exist a positive integer $s$ and a strongly commutative
Noetherian graded subalgebra $H'$ of $H$ for which
{\bf{Fg($M,H',\psi^s,ts$)}} holds
\end{proposition}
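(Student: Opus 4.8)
The plan is to exploit the tension between commutativity and strong commutativity using the Yoneda relation $(\dagger)$ exactly as in the remark preceding Definition~\ref{twistedvarieties}. Recall that for a commutative Noetherian graded $H$ there is (by \cite[Proposition 2.1]{Bergh1}, or its refinement in the proof of \cite[Theorem 2.5]{Bergh1} when $k$ contains an infinite field) a homogeneous element $\eta\in H$ of positive degree $tn$ and an integer $w$ such that multiplication by $\eta$ is injective on $H^{ti}$ for all $i\geq w$. Running the computation in that remark shows that for every homogeneous $\theta\in H^{tm}$ with $m$ large, ${_{\psi^n}\theta}=(-1)^{|\eta||\theta|}\theta_{\psi^{-n}}$; in particular, the high-degree part of $H$ is ``strongly commutative relative to $\psi^n$'' rather than $\psi$.

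Concretely, I would set $s=n$ (the degree of $\eta$ divided by $t$), and take $H'$ to be the subalgebra of $\HH^{ts*}({_{(\psi^s)^*}}\La_1,\La)=\HH^{tn*}({_{\psi^{n*}}}\La_1,\La)$ generated over $H^0=\Z(\La)$ by the elements $\theta\in H$ whose degree is a sufficiently large multiple of $tn$ — more precisely $H'=H^0\oplus\bigoplus_{j\geq w}H^{tnj}$, viewed inside the $t s$-twisted Hochschild ring. One checks that this really is a graded subalgebra of $\HH^{ts*}({_{(\psi^s)^*}}\La_1,\La)$: an element of $H^{tnj}$ is a $tnj$-fold extension of $\La$ by ${_{\psi^{nj}}}\La_1={_{(\psi^s)^j}}\La_1$, so it lies in degree $j$ of the $ts$-twisted ring, and the two multiplications agree because twisting by $\psi^{nj}=(\psi^s)^j$ is the operation used in both. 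Strong commutativity of $H'$ with respect to $\psi^s$ is precisely the identity ${_{(\psi^s)}}\theta=\theta_{(\psi^s)^{-1}}$, i.e. ${_{\psi^n}}\theta=\theta_{\psi^{-n}}$ for all homogeneous $\theta\in H'$, and the sign $(-1)^{|\eta||\theta|}$ disappears once $|\theta|=tnj$ is divisible by $|\eta|=tn$ (so it is even if $|\eta|$ is even, and if $|\eta|$ is odd one absorbs it by replacing $\eta$ with $\eta^2$ and $w$ accordingly); this is exactly what the remark establishes. Noetherianity of $H'$ is inherited: $H$ is a finitely generated $H^0$-algebra, hence Noetherian, and $H^0\oplus\bigoplus_{j\geq w}H^{tnj}$ is a finitely generated module over the Noetherian subring $H^0[\eta]$, so it is Noetherian.

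Finally I must verify {\bf Fg($M,H',\psi^s,ts$)}, i.e. that $\Ext_{\La}^{ts*}({_{(\psi^s)^*}}M,\La/\ra)=\bigoplus_{j\geq0}\Ext_{\La}^{tnj}({_{\psi^{nj}}}M,\La/\ra)$ is finitely generated over $H'$. This graded module is the ``$(n,w)$-Veronese-type'' submodule of $\Ext_{\La}^{t*}({_{\psi^*}}M,\La/\ra)$, which by hypothesis is a finitely generated module over the Noetherian ring $H$. A finitely generated graded module over a Noetherian graded algebra remains finitely generated over the subalgebra $H^0\oplus\bigoplus_{j\geq w}H^{tnj}$ when one passes to the corresponding sum of graded pieces — this is a standard Veronese/Artin–Rees argument: choose the degree-$tn$ element $\eta$, note $\Ext_{\La}^{t*}({_{\psi^*}}M,\La/\ra)$ is finitely generated over $H^0[\eta]$, and hence each residue class of degrees modulo $tn$ is finitely generated over $H^0[\eta]$; in particular the classes congruent to $0$, which form exactly $\Ext_{\La}^{ts*}({_{(\psi^s)^*}}M,\La/\ra)$, are finitely generated over $H^0[\eta]\subseteq H'$. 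The main obstacle, and the step deserving the most care, is the bookkeeping that lets one genuinely \emph{identify} the subring $H^0\oplus\bigoplus_{j\geq w}H^{tnj}$ of the $t$-twisted ring with a subring of the $ts$-twisted ring $\HH^{ts*}({_{(\psi^s)^*}}\La_1,\La)$ compatibly with products — one needs the observation that the product $\eta\theta=\eta\circ{_{\psi^{tm}\!}}\theta$ only ever twists by powers of $\psi$ that are multiples of $n$ in this range, so no information is lost and no new signs appear — together with checking that the choice of $\eta$ and $w$ can be made so that the parity issue with $(-1)^{|\eta||\theta|}$ never arises.
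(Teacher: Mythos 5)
Your overall strategy runs parallel to the paper's: invoke the remark preceding Definition \ref{twistedvarieties} to get $n$ and $w$ with ${_{\psi^n}\theta} = (-1)^{|\eta||\theta|}\theta_{\psi^{-n}}$ in degrees $\geq tw$, dispose of the sign by replacing $\eta$ with $\eta^2$, and pass to a subalgebra concentrated in degrees that are multiples of $tn$ and at least $tw$ (the paper instead sets $s=wn$ and takes $H'=H^0[\eta_1^s,\dots,\eta_r^s]$, powers of a generating set, which automatically lie in that range). Your identification of $H'=H^0\oplus\bigoplus_{j\geq w}H^{tnj}$ with a graded subalgebra of $\HH^{(ts)*}({_{(\psi^s)^*}}\La_1,\La)$ and the strong commutativity check are fine.

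The gap is in both finiteness verifications, which you route through $H^0[\eta]$. You claim that $\Ext_{\La}^{t*}({_{\psi^*}}M,\La/\ra)$ is a finitely generated $H^0[\eta]$-module and that $H'$ is a finitely generated $H^0[\eta]$-module. Both fail in general: $H^0=\Z(\La)$ is Artinian and $\eta$ is a single homogeneous element, so every finitely generated graded $H^0[\eta]$-module has graded pieces of bounded $k$-length, whereas $\Ext_{\La}^{t*}({_{\psi^*}}M,\La/\ra)$ has rate of growth $\cx^t M$, which may be $\geq 2$, and $H'$ has the same Krull dimension as $H$ (every algebra generator of $H$ has a power in $H'$), which may exceed $\dim H^0[\eta]\leq 1$. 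Moreover $H^0[\eta]\not\subseteq H'$ when $w>1$, since $\eta$ sits in Veronese degree $1$. The element furnished by \cite[Proposition 2.1]{Bergh1} acts eventually injectively on $H$; it does not make anything module-finite over $H^0[\eta]$. The correct transfer — which is exactly what the paper's choice $H'=H^0[\eta_1^s,\dots,\eta_r^s]$ with $s=wn$ is built for, and which also repairs your version — is that every algebra generator of $H$ has a power lying in $H'$, so $H$ is integral over $H'$ and hence a finitely generated $H'$-module; consequently $\Ext_{\La}^{t*}({_{\psi^*}}M,\La/\ra)$ is finitely generated over $H'$, and since $H'$ lives in degrees divisible by $ts$, the decomposition into degree classes modulo $ts$ shows that the summand $\Ext_{\La}^{(ts)*}({_{(\psi^s)^*}}M,\La/\ra)$ is finitely generated over $H'$. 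Noetherianity of $H'$ is then immediate for the paper's $H'$ (a finitely generated $H^0$-algebra), and for your truncated Veronese it follows from an Artin--Tate/Eakin--Nagata argument, not from module-finiteness over $H^0[\eta]$. As written, the two central steps of your verification of {\bf{Fg($M,H',\psi^s,ts$)}} would fail whenever $\cx^t M\geq 2$.
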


\begin{proof}
From the remark prior to Definition \ref{twistedvarieties} we see
that there exist two positive integers $n$ and $w$ such that
${_{\psi^n}\theta} = \theta_{\psi^{-n}}$ for every homogeneous
element $\theta \in H$ with $|\theta| \geq tw$. Let $\eta_1, \dots,
\eta_r \in H$ be homogeneous elements of positive degrees generating
$H$ as an algebra over $H^0$, and denote the integer $wn$ by $s$.
Then the subalgebra $H' = H^0[\eta_1^s, \dots, \eta_r^s]$ of $H$ is
strongly commutative with respect to the automorphism $\psi^s$, and
$\Ext_{\La}^{(ts)*} ( _{(\psi^s)^*}M, \La / \ra )$ is a finitely
generated $H'$-module.
\end{proof}

We now show that introducing the above finite generation hypothesis
enables us to compute the dimension of the twisted variety of a
module. Recall first that if $X = \bigoplus_{n=0}^{\infty} X_n$ is a
graded $k$-module of finite type (that is, the $k$-length of $X_n$
is finite for all $n$), then the \emph{rate of growth} of $X$,
denoted $\gamma (X)$, is defined as
$$\gamma (X) \stackrel{\text{def}}{=} \inf \{ c \in \mathbb{N} \cup
\{ 0 \} \mid \exists a \in \mathbb{R} \text{ such that } \ell_k(X_n)
\leq an^{c-1} \text{ for } n \gg 0 \}.$$ For a $\La$-module $Y$ with
minimal projective resolution $\cdots \to Q_1 \to Q_0 \to Y \to 0$,
we define the \emph{$m$-complexity} of $Y$ (where $m \geq 1$ is a
number) as the rate of growth of $\bigoplus_{n=0}^{\infty}Q_{mn}$,
and denote it by $\cx^m Y$. Note that the $1$-complexity of a module
coincides with the usual notion of complexity, and that the identity
$$\cx^m Y = \gamma \left ( \Ext_{\La}^{m*} (Y, \La / \ra ) \right )$$
always holds (the latter can be seen by adopting the arguments given
in \cite[Section 3]{Bergh1}).

The following result allows us to compute the dimension of
$\V_H^{\psi} (M)$ in terms of the $t$-complexity of $M$ provided
{\bf{Fg($M,H,\psi,t$)}} is satisfied. In particular Dade's Lemma
holds.

\begin{proposition}\label{dimension}
If \emph{\bf{Fg($M,H,\psi,t$)}} holds, then $\dim \V_H^{\psi} (M) =
\cx^t M$. In particular $\V_H^{\psi} (M)$ is trivial if and only if
$M$ has finite projective dimension.
\end{proposition}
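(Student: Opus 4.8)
The plan is to identify $\dim \V_H^{\psi}(M)$ with the Krull dimension of the ring $H/\A_H^{\psi}(M, \La/\ra)$, use the finite generation hypothesis to compute that dimension as a rate of growth, and then peel off the twist so as to recognize this rate of growth as $\cx^t M$.

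Writing $\A = \A_H^{\psi}(M, \La/\ra)$ and $X = \Ext_{\La}^{t*}(_{\psi^*}M, \La/\ra)$, I would first note that, since $H$ is finitely generated over the Artinian ring $\Z(\La)$, it is a Jacobson ring, so the closed subset $\V_H^{\psi}(M) = \{ \m \in \Maxspec H \mid \A \subseteq \m \}$ has dimension $\dim(H/\A)$. By {\bf{Fg($M,H,\psi,t$)}} we have $\A = \Ann_H X$, where $X$ is a nonzero finitely generated graded module over the Noetherian graded $k$-algebra $H$ whose degree-zero part $\Z(\La)$ has finite $k$-length. Standard commutative algebra --- the same Hilbert-series machinery that underlies the identity $\cx^t M = \gamma(\Ext_{\La}^{t*}(M, \La/\ra))$ recalled above, and the dimension computations of \cite{Bergh1} and \cite{Erdmann1} --- then yields
$$\dim \V_H^{\psi}(M) = \dim(H/\Ann_H X) = \gamma(X).$$

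Next I would remove the twist from $X$. The Jacobson radical is characteristic, so $\psi(\ra) = \ra$; twisting the exact sequence $0 \to \ra \to \La \to \La/\ra \to 0$ by $\psi^{-n}$ and using ${_{\psi^{-n}}\La} \simeq \La$ then gives ${_{\psi^{-n}}(\La/\ra)} \simeq \La/\ra$ as left $\La$-modules for every $n$. Since ${_{\psi^{-n}}(-)}$ is an exact auto-equivalence of $\mod \La$ carrying ${_{\psi^n}M}$ to $M$, it induces isomorphisms
$$\Ext_{\La}^{tn}(_{\psi^n}M, \La/\ra) \cong \Ext_{\La}^{tn}(M, {_{\psi^{-n}}(\La/\ra)}) \cong \Ext_{\La}^{tn}(M, \La/\ra)$$
for every $n \geq 0$. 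Hence $X$ and $\Ext_{\La}^{t*}(M, \La/\ra)$ agree in $k$-length in each degree, so $\gamma(X) = \gamma(\Ext_{\La}^{t*}(M, \La/\ra)) = \cx^t M$, which proves $\dim \V_H^{\psi}(M) = \cx^t M$. For the last assertion, $\cx^t M = 0$ holds exactly when $P_{tn} = 0$ for $n \gg 0$ in the minimal projective resolution of $M$, equivalently when $\Pd M < \infty$; and $\dim \V_H^{\psi}(M) = 0$ holds exactly when $\sqrt{\A} = \m_H$, equivalently when $\V_H^{\psi}(M) = \{ \m_H \}$, i.e.\ when the twisted variety is trivial.

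I expect the only genuinely substantive ingredient to be the commutative-algebra input identifying $\dim(H/\Ann_H X)$ with the polynomial rate of growth of the $k$-lengths of the graded components of the finitely generated graded $H$-module $X$ --- in the (mild) generality that the grading is supported on multiples of $t$ and that $H^0 = \Z(\La)$ is merely Artinian rather than a field; this is handled exactly as in \cite{Bergh1}. The remaining points --- the $\psi$-invariance of $\ra$, the auto-equivalence ${_{\psi^{-n}}(-)}$ used to strip the twist, and the passage between triviality of $\V_H^{\psi}(M)$ and finiteness of $\Pd M$ --- are routine.
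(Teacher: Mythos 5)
Your proposal is correct and takes essentially the same two-step route as the paper: first $\dim \V_H^{\psi}(M) = \gamma\left( \Ext_{\La}^{t*}({_{\psi^*}M}, \La/\ra) \right)$ via the dimension arguments of \cite{Benson}, \cite{Erdmann1} and \cite{Bergh1}, and then the observation that the twist does not affect the degreewise $k$-lengths, so this rate of growth is $\cx^t M$. The only (cosmetic) difference lies in how the twist is stripped: you use $\psi(\ra)=\ra$ and the auto-equivalence ${_{\psi^{-n}}(-)}$ to identify $\Ext_{\La}^{tn}({_{\psi^n}M},\La/\ra)$ with $\Ext_{\La}^{tn}(M,\La/\ra)$, whereas the paper notes that ${_{\psi^n}\mathbb{P}}$ is a minimal projective resolution of ${_{\psi^n}M}$ whose terms have the same $k$-lengths as those of $\mathbb{P}$; both come down to the same point.
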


\begin{proof}
Adopting the arguments used to prove \cite[Proposition
5.7.2]{Benson} and \cite[Proposition 2.1]{Erdmann1} gives $\dim
\V_H^{\psi} (M) = \gamma \left ( \Ext_{\La}^{t*} ( _{\psi^*}M, \La /
\ra ) \right )$. For any $\La$-module $Y$ and any $k$-automorphism
$\La \xrightarrow{\phi} \La$ there is an isomorphism $Y \simeq
{_{\phi}Y}$ of $k$-modules, and so if $\cdots \to Q_1 \to Q_0 \to Y
\to 0$ is a minimal projective resolution we see that the
$m$-complexity of $Y$ equals the rate of growth of
$\bigoplus_{n=0}^{\infty} {_{\phi^n}(Q_{mn})}$. In particular the
equalities
$$\cx^t M = \gamma \left ( \bigoplus_{n=0}^{\infty}
{_{\psi^n}(P_{tn})} \right ) = \gamma \left ( \Ext_{\La}^{t*} (
_{\psi^*}M, \La / \ra ) \right )$$ hold, where
$$\mathbb{P} \colon \cdots \to P_2 \xrightarrow{d_2} P_1
\xrightarrow{d_1} P_0 \xrightarrow{d_0} M \to 0$$ is the minimal
projective resolution of $M$.
\end{proof}

Note that whenever {\bf{Fg($M,H,\psi,t$)}} is satisfied the
dimension of $\V_H^{\psi} (M)$, and therefore also the
$t$-complexity of $M$, must be finite; since $H$ is commutative
graded Noetherian it is generated as an algebra over $H^0$ by a
finite set $\{ x_0, \dots, x_r \}$ of homogeneous elements of
positive degrees, giving $\gamma \left ( \Ext_{\La}^{t*} (
_{\psi^*}M, \La / \ra ) \right ) \leq r$ (see the discussion prior
to \cite[Proposition 3.1]{Bergh1}). It then follows from the proof
of the above proposition that $\cx^tM \leq r$.

The next result gives a sufficient and necessary condition for the
variety to be one dimensional. Recall that a $\La$-module $Y$ is
\emph{periodic} if there exists a positive integer $p$ such that $Y
\simeq \Omega_{\La}^p(Y)$, whereas it is \emph{eventually periodic}
if $\Omega_{\La}^i(Y)$ is periodic for some $i \geq 0$. For a
$k$-automorphism $\La \xrightarrow{\phi} \La$ we define $Y$ to be
\emph{$\phi$-periodic} if there exists a positive integer $p$ such
that $Y \simeq \Omega_{\La}^p(_{\phi}Y)$, and \emph{eventually
$\phi$-periodic} if $\Omega_{\La}^i(Y)$ is $\phi$-periodic for some
$i \geq 0$.

\begin{proposition}\label{periodic}
If \emph{\bf{Fg($M,H,\psi,t$)}} holds, then $\dim \V_H^{\psi} (M)
=1$ if and only if $M$ is eventually $\psi^i$-periodic for some $i
\geq 1$. Moreover, when this occurs there is a positive integer
$w$ such that $\Omega_{\La}^{tj}(M) \simeq \Omega_{\La}^{t(j+w)}(
_{\psi^w}M)$ for some $j \geq 0$.
\end{proposition}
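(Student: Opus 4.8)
The plan is to reduce the statement to a one-dimensional Dade-type criterion via Proposition \ref{dimension}, which already gives $\dim \V_H^{\psi}(M) = \cx^t M$. So $\dim \V_H^{\psi}(M) = 1$ is equivalent to $\cx^t M = 1$, i.e.\ the lengths $\ell_k(P_{tn})$ of the modules appearing in every $t$-th spot of the minimal projective resolution of $M$ are bounded above by a constant (and not eventually zero). Equivalently, by the discussion preceding the proposition, $\bigoplus_{n} {_{\psi^n}(P_{tn})}$ has rate of growth $1$, which means the graded $k$-module $\Ext_{\La}^{t*}({_{\psi^*}M}, \La/\ra)$ has finite total length in each degree and these lengths are eventually bounded. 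The first direction to establish is: if $M$ is eventually $\psi^i$-periodic then $\cx^t M = 1$. Suppose $\Omega_{\La}^{\ell}(M) \simeq \Omega_{\La}^{\ell+p}({_{\psi^i}}\Omega_{\La}^{\ell}(M))$ for some $\ell \geq 0$ and $p \geq 1$; then the minimal resolution of $\Omega_{\La}^{\ell}(M)$ is periodic up to the twist $\psi^i$, so the modules $P_n$ for $n \geq \ell$ satisfy $P_{n+p} \simeq {_{\psi^i}P_n}$ as $k$-modules, hence $\ell_k(P_n)$ is bounded (it takes only finitely many values). This forces $\cx^t M \leq 1$; combined with triviality excluded (the variety is one-dimensional, not zero-dimensional, so $M$ has infinite projective dimension by the last sentence of Proposition \ref{dimension}), we get $\cx^t M = 1$.

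For the converse — the harder direction — suppose $\cx^t M = 1$, so $\dim \V_H^{\psi}(M) = 1$. The strategy is the standard one for showing that complexity-one modules are eventually periodic: since $H$ is commutative Noetherian graded and $\Ext_{\La}^{t*}({_{\psi^*}M}, \La/\ra)$ is a finitely generated $H$-module of Krull dimension $1$, one can find a homogeneous element $\eta \in H$ of some positive degree $tw$ such that multiplication by $\eta$ on $\Ext_{\La}^{t*}({_{\psi^*}M}, M)$ (or on the resolution level) is eventually an isomorphism in high degrees — this is where the Noetherian hypothesis and the finite-generation of the Ext-module over $H$ do the real work, together with the fact that the annihilator ideal $\A_H^{\psi}(M,M)$ defines a $1$-dimensional variety so $H/\A_H^{\psi}(M,M)$ has a homogeneous parameter $\eta$. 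The element $\eta$ corresponds to a bimodule extension, hence to a chain map $\Omega_{\La}^{tw}({_{\psi^w}M}) \to M$ of minimal resolutions (after shifting), and "eventually an isomorphism in cohomology" should be upgraded to "an isomorphism of syzygies in high degree" using a length count: both $\Omega_{\La}^{tj}(M)$ and $\Omega_{\La}^{t(j+w)}({_{\psi^w}M})$ have bounded, eventually equal $k$-lengths (by $\cx^t M = 1$ and the twist-invariance of lengths), so a map between them which is a cohomology isomorphism must be an isomorphism for $j \gg 0$ by a Nakayama/length argument on minimal resolutions. This yields $\Omega_{\La}^{tj}(M) \simeq \Omega_{\La}^{t(j+w)}({_{\psi^w}M})$ for some $j \geq 0$, which is exactly the last assertion of the proposition and says precisely that $\Omega_{\La}^{tj}(M)$ is $\psi^w$-periodic, hence $M$ is eventually $\psi^w$-periodic with $w \geq 1$.

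The main obstacle I anticipate is the step producing the parameter $\eta \in H$ and converting "eventual cohomological isomorphism" into "eventual isomorphism of syzygies." This is where one must be careful: the construction of $\eta$ uses \cite[Proposition 2.1]{Bergh1}-type results (a regular element in a one-dimensional finitely generated graded module over a commutative Noetherian graded ring), but one also needs the twisting to interact correctly — the chain map induced by $\eta$ lives between the twisted resolutions, and the strong commutativity of $H$ (guaranteed by {\bf{Fg($M,H,\psi,t$)}}) is what ensures the left and right scalar actions agree so that the map $\Omega_{\La}^{tw}({_{\psi^w}M}) \to M$ is actually $\La$-linear and respects the $H$-module structure, exactly as in the discussion justifying that $\Ext_{\La}^{t*}({_{\psi^*}-},-)$ preserves maps. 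Once that chain map is in hand, the length bookkeeping is routine: minimality of the resolutions plus $\cx^t M = 1$ forces the relevant syzygies to have the same $k$-length for $j$ large, and a surjection (or injection) of modules of equal finite length is an isomorphism. I would also need to rule out the degenerate case where the ``map'' is zero, but that is excluded because it would force $M$ to have finite projective dimension, contradicting $\dim \V_H^{\psi}(M) = 1$.
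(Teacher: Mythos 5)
Your overall route is the same as the paper's: reduce to $\cx^t M$ via Proposition \ref{dimension}, get the easy direction from boundedness of the lengths $\ell_k(P_{tn})$, and for the converse produce a homogeneous $\eta \in H$ of degree $tw$ acting eventually injectively on $\Ext_{\La}^{t*}({_{\psi^*}M},\La/\ra)$ and lift it to a chain map from the twisted, shifted minimal resolution to $\mathbb{P}$. The gap is precisely at the step you defer to a ``Nakayama/length argument'', and the two specific claims you lean on there do not hold as stated. First, $\cx^t M = 1$ only gives that $\ell_k(P_{tn})$, hence $\ell_k(\Omega_{\La}^{tn}(M))$, is \emph{bounded}; it does not give ``eventually equal'' lengths of $\Omega_{\La}^{tj}(M)$ and $\Omega_{\La}^{t(j+w)}({_{\psi^w}M})$ -- in the paper such equality is a \emph{conclusion}, not an input. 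Second, ``a map between them which is a cohomology isomorphism must be an isomorphism'' is not a fact you can invoke; moreover {\bf{Fg($M,H,\psi,t$)}} gives finite generation of $\Ext_{\La}^{t*}({_{\psi^*}M},\La/\ra)$, not of $\Ext_{\La}^{t*}({_{\psi^*}M},M)$, and the whole mechanism requires the semisimple coefficient: by minimality $\Ext_{\La}^{ti}({_{\psi^i}M},\La/\ra) = \Hom_{\La}({_{\psi^i}(P_{ti})},\La/\ra)$, and multiplication by $\eta$ is precomposition with the lifted component $f_{ti}\colon {_{\psi^w}(P_{t(i+w)})} \to P_{ti}$, so its kernel is $\Hom_{\La}({_{\psi^i}(\Coker f_{ti})},\La/\ra)$. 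Hence eventual \emph{injectivity} (which is all that \cite[Proposition 2.1]{Bergh1} provides and all that is needed) forces $\Coker f_{ti}=0$, i.e.\ $f_{ti}$ is surjective, giving surjections $\Omega_{\La}^{t(i+w)}({_{\psi^w}M}) \twoheadrightarrow \Omega_{\La}^{ti}(M)$ for $i \gg 0$. The isomorphism is then obtained by iterating: twisting produces a tower $\cdots \twoheadrightarrow \Omega_{\La}^{t(2w+i)}({_{\psi^{2w}}M}) \twoheadrightarrow \Omega_{\La}^{t(w+i)}({_{\psi^w}M}) \twoheadrightarrow \Omega_{\La}^{ti}(M)$ whose lengths are bounded, hence eventually constant, so far enough out each surjection is between modules of equal finite length and is an isomorphism; untwisting by $\psi^{-qw}$ yields $\Omega_{\La}^{tj}(M) \simeq \Omega_{\La}^{t(j+w)}({_{\psi^w}M})$. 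Without some version of this conversion (injectivity on Ext with coefficients in $\La/\ra$ $\Rightarrow$ surjectivity of the lift $\Rightarrow$ stabilization along the tower) your sketch does not close; in particular ``ruling out the zero map'' is not the issue.

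A smaller point: in the forward direction your parenthetical is circular -- you invoke $\dim \V_H^{\psi}(M)=1$ to exclude finite projective dimension while proving exactly that $\dim \V_H^{\psi}(M)=1$. Instead, note that eventual $\psi^i$-periodicity (with the periodic syzygy nonzero) forces the minimal resolution to be infinite, so all $P_{tn}$ are nonzero and $\cx^t M \geq 1$, while the boundedness of the lengths you correctly established gives $\cx^t M \leq 1$; then Proposition \ref{dimension} finishes that direction.
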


\begin{proof}
If $M$ is eventually $\psi^i$-periodic, then the sequence
$$\ell_k (P_0), \ell_k (P_1), \ell_k (P_2), \dots$$
must be bounded, that is, the $1$-complexity of $M$ is $1$. But then
the sequence
$$\ell_k (P_0), \ell_k (P_t), \ell_k (P_{2t}), \dots$$
is also bounded, that is, the $t$-complexity of $M$ is also $1$, and
consequently $\dim \V_H^{\psi} (M) =1$.

Conversely, suppose the latter of the above sequences is bounded. By
\cite[Proposition 2.1]{Bergh1} there exists a homogeneous element
$\eta \in H$ of positive degree, say $|\eta| = tw$, such that the
multiplication map
$$\Ext_{\La}^{ti} ( _{\psi^i}M, \La / \ra ) \xrightarrow{\cdot \eta}
\Ext_{\La}^{t(i+w)} ( _{\psi^{i+w}}M, \La / \ra )$$ is injective for
$i \gg 0$. Represent the element $\eta \otimes_{\La} M \in
\Ext_{\La}^{t*} ( _{\psi^*}M, M)$ by a map $f_{\eta} \colon
\Omega_{\La}^{tw}(_{\psi^w}M) \to M$. Then for each $i \geq 0$ there
is a map $f_i \colon _{\psi^w}(P_{tw+i}) \to P_i$ making the diagram
$$\xymatrix{
\cdots \ar[r] & _{\psi^w}(P_{tw+2}) \ar[d]^{f_2} \ar[r]^{d_{tw+2}} &
_{\psi^w}(P_{tw+1}) \ar[d]^{f_1} \ar[r]^{d_{tw+1}} &
_{\psi^w}(P_{tw}) \ar[d]^{f_0} \ar[r] &
\Omega_{\La}^{tw}(_{\psi^w}M) \ar[d]^{f_{\eta}} \ar[r] & 0 \\
\cdots  \ar[r] & P_2 \ar[r]^{d_2} & P_1 \ar[r]^{d_1} & P_0
\ar[r]^{d_0} & M \ar[r] & 0 }$$ with exact rows commute. If $\theta
\in \Ext_{\La}^{t*} ( _{\psi^*}M, \La / \ra )$ is a homogeneous
element, say $|\theta| =tn$, and represented by a map $f_{\theta}
\colon _{\psi^n}(P_{tn}) \to \La / \ra$, then $\theta \eta \in
\Ext_{\La}^{t(w+n)} ( _{\psi^{w+n}}M, \La / \ra )$ is represented by
the composite map
$$_{\psi^{w+n}}(P_{t(w+n)}) \xrightarrow{_{\psi^n}f_{tn}}
{_{\psi^n}(P_{tn})} \xrightarrow{f_{\theta}} \La / \ra.$$

For any $i \geq 0$ the complex $_{\psi^i}\mathbb{P}$ is a minimal
projective resolution of $_{\psi^i}M$, and therefore
$\Ext_{\La}^{ti} ( _{\psi^i}M, \La / \ra ) = \Hom_{\La} (
_{\psi^i}(P_{ti}), \La / \ra )$. Moreover, the multiplication map
$$\Ext_{\La}^{ti} ( _{\psi^i}M, \La / \ra ) \xrightarrow{\cdot \eta}
\Ext_{\La}^{t(i+w)} ( _{\psi^{i+w}}M, \La / \ra )$$ is just the map
\begin{eqnarray*}
\Hom_{\La} ( _{\psi^i}(P_{ti}), \La / \ra ) &
\xrightarrow{(_{\psi^i}f_{ti})^*}
& \Hom_{\La} ( _{\psi^{i+w}}(P_{t(i+w)}), \La / \ra ) \\
g & \mapsto & g \circ _{\psi^i}f_{ti},
\end{eqnarray*}
and since the exact sequence
$$_{\psi^{i+w}}(P_{t(i+w)}) \xrightarrow{_{\psi^i}f_{ti}}
{_{\psi^i}(P_{ti})} \to {_{\psi^i}( \Coker f_{ti} )} \to 0$$ shows
that the kernel of the multiplication map is isomorphic to
$\Hom_{\La} ( _{\psi^i}( \Coker f_{ti} ), \La / \ra )$, we see that
$\Coker f_{ti} =0$ for $i \gg 0$. Consequently, for each $i \gg 0$
there exists a surjective map $\Omega_{\La}^{t(w+i)} ( _{\psi^w}M)
\twoheadrightarrow \Omega_{\La}^{ti}(M)$ and therefore also a
sequence
$$\cdots \twoheadrightarrow \Omega_{\La}^{t(3w+i)} ( _{\psi^{3w}}M)
\twoheadrightarrow \Omega_{\La}^{t(2w+i)} ( _{\psi^{2w}}M)
\twoheadrightarrow \Omega_{\La}^{t(w+i)} ( _{\psi^w}M)
\twoheadrightarrow \Omega_{\La}^{ti}(M)$$ of surjections. However,
by assumption the sequence
$$\ell_k (P_0), \ell_k (P_t), \ell_k (P_{2t}), \dots$$
is bounded, and therefore $\Omega_{\La}^{t((q+1)w+i)} (
_{\psi^{(q+1)w}}M)$ must be isomorphic to $\Omega_{\La}^{t(qw+i)}
( _{\psi^{qw}}M)$ for large $q$ and $i$. By setting $j=qw+i$ and
twisting with the automorphism $\psi^{-qw}$, we see that
$\Omega_{\La}^{tj}(M) \simeq \Omega_{\La}^{t(j+w)}( _{\psi^w}M)$
for $j \gg 0$.
\end{proof}

As a particular case of the proposition we obtain the following
result on $D \Tr$-periodicity over a Frobenius algebra.

\begin{proposition}\label{DTrperiodic}
Suppose $k$ is a field and $\La$ is a Frobenius algebra, and let
$\La \xrightarrow{\nu} \La$ be a Nakayama automorphism. If $M$ does
not have a nonzero projective summand and
\emph{\bf{Fg($M,H,\nu^n,2n$)}} holds for some $n \geq 1$, then $\dim
\V_H^{\nu}(M)=1$ if and only if $M \simeq \tau^p(M)$ for some $p
\geq 1$.
\end{proposition}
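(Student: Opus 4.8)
The plan is to deduce this from Proposition~\ref{periodic}, which tells us that $\dim\V_H^{\nu}(M)=1$ if and only if $M$ is eventually $\psi^i$-periodic for some $i\geq 1$, with $\psi=\nu^n$ and $t=2n$; moreover in that case $\Omega_{\La}^{2nj}(M)\simeq\Omega_{\La}^{2n(j+w)}(_{\nu^{nw}}M)$ for some $j\geq 0$ and some $w\geq 1$. So everything reduces to translating the statement ``$M$ is eventually $\nu^{ni}$-periodic for some $i$'' into ``$M\simeq\tau^p(M)$ for some $p\geq 1$,'' using the hypothesis that $M$ has no nonzero projective summand.

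First I would record the key identification, already established in the examples preceding Proposition~\ref{commutativity} and in the example following Corollary~\ref{syzygy}: over a Frobenius algebra the functor $\tau=D\Tr$ is isomorphic to $\Omega_{\La}^2\circ(\,_{\nu}\La_1\otimes_{\La}-\,)$, i.e.\ $\tau(X)\simeq\Omega_{\La}^2(_{\nu}X)$ for every $\La$-module $X$. Iterating, $\tau^p(X)\simeq\Omega_{\La}^{2p}(_{\nu^p}X)$ for all $p\geq 1$ (here one uses that $_{\nu}\La_1\otimes_{\La}-$ commutes with $\Omega_{\La}$ up to projective summands, and that $\nu$ is an automorphism so the twist functors compose cleanly). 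Thus, for any $m$, the condition $\Omega_{\La}^{2m}(_{\nu^m}M)\simeq M$ in $\mod\La$ (as opposed to just in the stable category) is precisely $\tau^m(M)\simeq M$.

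For the ``if'' direction: if $M\simeq\tau^p(M)$, then $M\simeq\Omega_{\La}^{2p}(_{\nu^p}M)$, so $M$ is $\nu^p$-periodic; taking $i=p$ and applying the easy half of Proposition~\ref{periodic} (bounded Betti numbers force $\dim\V_H^{\nu}(M)=1$) gives the conclusion. For the ``only if'' direction: assume $\dim\V_H^{\nu}(M)=1$. By Proposition~\ref{periodic} there is $w\geq 1$ and $j\geq 0$ with $\Omega_{\La}^{2nj}(M)\simeq\Omega_{\La}^{2n(j+w)}(_{\nu^{nw}}M)$. Rewriting the right-hand side via the $\tau$-formula, $\Omega_{\La}^{2n(j+w)}(_{\nu^{nw}}M)\simeq\Omega_{\La}^{2nj}\bigl(\Omega_{\La}^{2nw}(_{\nu^{nw}}M)\bigr)\simeq\Omega_{\La}^{2nj}(\tau^{nw}(M))$, so $\Omega_{\La}^{2nj}(M)\simeq\Omega_{\La}^{2nj}(\tau^{nw}(M))$. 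Now I invoke the fact that over a self-injective (in particular Frobenius) algebra, $\Omega_{\La}$ induces an equivalence on the stable category $\stmod\La$, and that a module with no nonzero projective summand is isomorphic to a given module (with no projective summand) precisely when they are stably isomorphic. Since $\tau$ also preserves ``no nonzero projective summand,'' both $M$ and $\tau^{nw}(M)$ have this property, and cancelling the $2nj$ syzygies gives $M\simeq\tau^{nw}(M)$ in $\mod\La$; set $p=nw$.

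The step I expect to be the main obstacle is the last one: passing from an isomorphism after applying $\Omega_{\La}^{2nj}$ back to an isomorphism of $M$ with $\tau^{nw}(M)$ on the nose, rather than merely stably. This requires care with projective summands—one needs that $\Omega_{\La}$ is well-defined and invertible on the stable module category (true since $\La$ is self-injective), that $M$ and $\tau^{nw}(M)$ are both in the image of ``modules without projective summands,'' and the standard fact that two such modules are isomorphic iff they are isomorphic in $\stmod\La$. The hypothesis ``$M$ does not have a nonzero projective summand'' is exactly what makes this cancellation legitimate; I would state this as a short lemma or cite the relevant property of $\stmod\La$ (e.g.\ from \cite{Auslander}). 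Everything else is bookkeeping with the functorial identities $\tau^p\simeq\Omega_{\La}^{2p}\circ{}_{\nu^p}(-)$ and the two halves of Proposition~\ref{periodic}.
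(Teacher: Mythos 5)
Your proposal is correct and follows essentially the same route as the paper: the paper also applies Proposition \ref{periodic} with $\psi=\nu^n$, $t=2n$, identifies $\Omega_{\La}^{2nw}({_{\nu^{nw}}M})$ with $\tau^{nw}(M)$ via $\tau\simeq\Omega_{\La}^2\mathcal{N}$ and $\mathcal{N}\simeq{_{\nu}\La_1}\otimes_{\La}-$, and then ``takes cosyzygies'' — which is exactly your cancellation of $\Omega_{\La}^{2nj}$ in $\stmod\La$, the step where self-injectivity and the hypothesis that $M$ has no nonzero projective summand are used. (The only cosmetic point: in the ``if'' direction, to invoke Proposition \ref{periodic} literally with $\psi=\nu^n$ one should iterate $M\simeq\tau^p(M)$ to $M\simeq\tau^{np}(M)$, but as you note boundedness of the Betti numbers already gives $\cx^{2n}M=1$.)
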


\begin{proof}
By the previous proposition the variety $\V_H^{\nu}(M)$ is one
dimensional if and only if there is a positive integer $w$ such that
$\Omega_{\La}^{2nj}(M) \simeq \Omega_{\La}^{2n(j+w)}( _{\nu^{nw}}M)$
for some $j \geq 0$. Taking cosyzygies we see that the latter
happens precisely when $M \simeq \Omega_{\La}^{2nw}( _{\nu^{nw}}M)$,
that is, when $M \simeq \tau^{nw}(M)$.
\end{proof}

We illustrate this last result with an example.

\begin{example}
Let $k$ be an algebraically closed field of odd characteristic and
$q \in k$ a nonzero element which is \emph{not} a root of unity, and
denote by $\La$ the $k$-algebra
$$\La = k \langle x,y \rangle / ( x^2, xy+qyx, y^2 ).$$
We saw in the example following Proposition \ref{commutativity} that
the Nakayama automorphism $\nu$ of $\La$ is defined by
$$x \mapsto -q^{-1}x, \hspace{3mm} y \mapsto -qy,$$
and that $\HH^{4*}(_{\nu^{2*}}\La_1, \La )$ is isomorphic to the
fibre product $k[ \theta ] \times_k k$ with $\theta$ of degree $4$.
In particular $\HH^{4*}(_{\nu^{2*}}\La_1, \La )$ is strongly
commutative, and we denote this ring by $H$.

For elements $\alpha, \beta \in k$, denote by $M_{(\alpha, \beta)}$
the $\La$-module $\La (\alpha x+ \beta y)$ (see \cite{Smalo} for a
counterexample, using the module $M_{(1,1)}$, to a question raised
by Auslander, a question for which a counterexample was first given
in \cite{Jorgensen}). Consider the module $M = M_{(1, \beta )}$ for
$\beta \neq 0$, and for each $i \geq 0$ let $P_i= \La$. The sequence
$$\mathbb{P} \colon \cdots \to P_3 \xrightarrow{\cdot(x+q^3\beta y)} P_2
\xrightarrow{\cdot(x+q^2\beta y)} P_1 \xrightarrow{\cdot(x+q\beta
y)} P_0 \xrightarrow{\cdot(x+\beta y)} M \to 0$$ is a minimal
projective resolution of $M$, hence since $_{\nu^n}\mathbb{P}$ is a
minimal projective resolution of $_{\nu^n}M$ for any $n \geq 1$ we
see that $\Ext_{\La}^{2n}( {_{\nu^n}M},k) = \Hom_{\La} (
_{\nu^n}\La, k)$ is one dimensional. We shall prove that
{\bf{Fg($M,H,\nu,2$)}} holds.

Recall from the example following Proposition \ref{commutativity}
the minimal bimodule projective resolution
$$( \mathbb{F}, \delta ) \colon \cdots \to F^{n+1}
\xrightarrow{\delta} F^n \xrightarrow{\delta} F^{n-1} \to \cdots$$
of $\La$, where the set $\{ \tilde{f}^n_i \}_{i=0}^n$ generates
$F^n$ as a $\Lae$-module and the differential $\delta \colon F^n \to
F^{n-1}$ is given by
$$\tilde{f}^n_i \mapsto \left [ x \tilde{f}^{n-1}_i + (-1)^n q^i
\tilde{f}^{n-1}_i x \right ] + \left [ q^{n-i} y
\tilde{f}^{n-1}_{i-1} + (-1)^n \tilde{f}^{n-1}_{i-1} y \right ].$$
The element $\theta \in H^4$ represented by the map
\begin{eqnarray*}
g_4 \colon _{\nu^2}F^4_1 & \to & \La \\
\tilde{f}^4_i & \mapsto & \left \{ \begin{array}{ll}
                               1 & \text{for } i=2 \\
                               0 & \text{for } i \neq 2
                                  \end{array} \right.
\end{eqnarray*}
generates $H$ as an algebra over $H^0$. The resolution $\mathbb{F}
\otimes_{\La} M$ is also a projective resolution of $M$, and
defining $\La$-linear maps
\begin{eqnarray*}
h_n \colon P_n & \to & F^n \otimes_{\La} M \\
1 & \mapsto & \left ( \sum_{i=0}^n q^{\frac{i(i+1)}{2}} \beta^i
\tilde{f}^n_i \right ) \otimes (x+\beta y)
\end{eqnarray*}
gives a commutative diagram
$$\xymatrix{
\cdots \ar[r] & P_2 \ar[d]^{h_2} \ar[r]^{\cdot (x+q^2\beta y)} & P_1
\ar[d]^{h_1} \ar[r]^{\cdot (x+q\beta y)} &
P_0 \ar[d]^{h_0} \ar[r]^{\cdot (x+\beta y)} & M \ar@{=}[d] \ar[r] & 0 \\
\cdots \ar[r] & F^2 \otimes_{\La} M \ar[r]^{\delta \otimes 1} & F^1
\otimes_{\La} M \ar[r]^{\delta \otimes 1} & F^0 \otimes_{\La} M
\ar[r] & M \ar[r] & 0 }$$ with exact rows. Consequently, the element
$\theta \otimes_{\La} M \in \Ext_{\La}^4 ( _{\nu^2}M,M)$ is
represented by the composite map
\begin{eqnarray*}
\bar{f}_{\theta} \colon _{\nu^2}(P_4) & \xrightarrow{(g_4 \otimes 1)
\circ h_4} & \La \otimes_{\La} M \simeq M  \\
1 & \mapsto & q^3\beta^2(x+\beta y).
\end{eqnarray*}
Now for each $i \geq 0$ define $f_i \colon _{\nu^2}(P_{i+4}) \to
_{\nu^2}(P_i)$ by $1 \mapsto q^{2i+3}\beta^2$. We then obtain a
commutative diagram
$$\xymatrix{
\cdots \ar[r] & _{\nu^2}(P_7) \ar[d]^{f_3} \ar[r]^{\cdot (x+q^7\beta
y)} & _{\nu^2}(P_6) \ar[d]^{f_2} \ar[r]^{\cdot (x+q^6\beta y)} &
_{\nu^2}(P_5) \ar[d]^{f_1} \ar[r]^{\cdot (x+q^5\beta y)} &
_{\nu^2}(P_4)
\ar[d]^{f_0} \ar[r] \ar[dr]^{\bar{f}_{\theta}} & \cdots \\
\cdots \ar[r] & P_3 \ar[r]^{\cdot (x+q^3\beta y)} & P_2
\ar[r]^{\cdot (x+q^2\beta y)} & P_1 \ar[r]^{\cdot (x+q\beta y)} &
P_0 \ar[r]^{\cdot (x+\beta y)} & M \ar[r] & 0 }$$ with exact rows,
hence if $\mu \in \Ext_{\La}^{2n}( _{\nu^n}M,k) = \Hom_{\La} (
_{\nu^n}(P_{2n}), k)$ is an element represented by a map
$\bar{f}_{\mu} \colon _{\nu^n}(P_{2n}) \to k$ we see that the
element $\mu \cdot \theta \in \Ext_{\La}^{2(n+2)}( _{\nu^(n+2)}M,k)$
is represented by the composite map
$$_{\nu^(n+2)}(P_{2(n+2)}) \xrightarrow{_{\nu^n}(f_{2n})}
{_{\nu^n}(P_{2n})} \xrightarrow{\bar{f}_{\mu}} k.$$ Moreover, this
composition is nonzero whenever $\mu$ is nonzero, since $f_{2n}$ is
just multiplication with $q^{4n+3}\beta^2$. Therefore, since
$\Ext_{\La}^{2n}( _{\nu^n}M,k)$ is one dimensional for each $n \geq
1$, the $H$-module $\Ext_{\La}^{2*}( _{\nu^*}M,k)$ is finitely
generated; it is generated as an $H$-module by any $k$-basis in
$\Ext_{\La}^0( _{\nu^0}M,k) = \Hom_{\La}(M,k)$ together with any
nonzero elements $\mu_1 \in \Ext_{\La}^2( _{\nu}M,k)$ and $\mu_2 \in
\Ext_{\La}^4( _{\nu^2}M,k)$.

The above shows that {\bf{Fg($M_{(1, \beta )},H,\nu,2$)}} holds for
$\beta \neq 0$, and since the $2$-complexity of $M_{(1, \beta )}$ is
obviously $1$ we see from Proposition \ref{dimension} that the
variety $\V_H^{\nu}(M_{(1, \beta )})$ is one dimensional. From
Proposition \ref{DTrperiodic} we conclude that $M_{(1, \beta )}
\simeq \tau^w(M_{(1, \beta )})$ for some $w \geq 1$. Indeed, in
\cite{Liu} it is shown that $M_{(1, \beta )}$ is isomorphic to $\tau
(M_{(1, \beta )})$.

Recall that each nonprojective indecomposable $\La$-module is
annihilated by $yx$, and is therefore a module over the algebra $\La
/ (yx)$ (see \cite[Section 4]{Schulz}). The latter is stably
equivalent to the Kronecker algebra, an equivalence under which the
representation
$$\xymatrix{
k \ar@/^/[r]^{\beta} \ar@/_/[r]_{\alpha} & k }$$ corresponds to the
module $M_{(\alpha, \beta )}$. Denoting $M_{(\alpha, \beta )}$ by
$M_{(\alpha, \beta )}^1$, it follows from the well known
representation theory of the Kronecker algebra (see, for example,
\cite{Auslander} and \cite{Assem}) that the indecomposable
$\tau$-periodic $\La$-modules are divided into distinct countable
classes $\{ M_{(\alpha, \beta)}^i \}_{i=1}^{\infty}$, one for each
pair $(\alpha, \beta) \in \{ (0,1) \} \cup \{ (1, \beta) \}_{\beta
\in k}$, such that for each $i \geq 1$ there exists a short exact
sequence
$$0 \to M_{(\alpha, \beta)}^i \to M_{(\alpha, \beta)}^{i-1} \oplus
M_{(\alpha, \beta)}^{i+1} \to M_{(\alpha, \beta)}^i \to 0,$$ where
$M_{(\alpha, \beta)}^0 =0$. Now each such exact sequence induces an
exact sequence
$$\Ext_{\La}^{2*}( _{\nu^*}M_{\alpha}^i,k) \to \Ext_{\La}^{2*}(
_{\nu^*}M_{\alpha}^{i-1},k) \oplus \Ext_{\La}^{2*}(
_{\nu^*}M_{\alpha}^{i+1},k) \to \Ext_{\La}^{2*}(
_{\nu^*}M_{\alpha}^i,k)$$ of $H$-modules, and so since $H$ is
Noetherian and $\Ext_{\La}^{2*}( {_{\nu^*}M}_{(1, \beta )}^1,k)$ is
a finitely generated $H$-module whenever $\beta$ is nonzero, an
induction argument shows that $\Ext_{\La}^{2*}( {_{\nu^*}M}_{(1,
\beta )}^i,k)$ is a finitely generated $H$-module for any $i \geq 1$
and $\beta \neq 0$. We conclude that {\bf{Fg($M_{(1, \beta
)}^i,H,\nu,2$)}} holds for \emph{all} the modules $\{ M_{(1, \beta
)}^i \}_{i=1}^{\infty}$ when $\beta$ is nonzero.

However, there are two more classes of indecomposable
$\tau$-periodic $\La$-modules, namely $\{ M_{(1,0)}^i
\}_{i=0}^{\infty}$ and $\{ M_{(0,1)}^i \}_{i=0}^{\infty}$. Do these
modules satisfy the finite generation hypothesis? The answer is no,
and to see this, consider the module $M_{(1,0)} = \La x$. Letting
$P_i = \La$ for each $i \geq 0$ we see that the sequence
$$\cdots \to P_3 \xrightarrow{\cdot x} P_2 \xrightarrow{\cdot x}
P_1 \xrightarrow{\cdot x} P_0 \xrightarrow{\cdot x} M_{(1,0)} \to
0$$ is a minimal projective resolution of $M_{(1,0)}$, and defining
$\La$-linear maps
\begin{eqnarray*}
h_n \colon P_n & \to & F^n \otimes_{\La} M_{(1,0)} \\
1 & \mapsto & \tilde{f}^n_0 \otimes x
\end{eqnarray*}
gives a commutative diagram
$$\xymatrix{
\cdots \ar[r] & P_1 \ar[d]^{h_1} \ar[r]^{\cdot x} &
P_0 \ar[d]^{h_0} \ar[r]^{\cdot x} & M_{(1,0)} \ar@{=}[d] \ar[r] & 0 \\
\cdots \ar[r] & F^1 \otimes_{\La} M_{(1,0)} \ar[r]^{\delta \otimes
1} & F^0 \otimes_{\La} M_{(1,0)} \ar[r] & M_{(1,0)} \ar[r] & 0 }$$
with exact rows. Since the map $h_4$ does not ``hit" the generator
$\tilde{f}^4_2 \otimes x \in F^4 \otimes_{\La} M_{(1,0)}$, we see
that the element $\theta \otimes_{\La} M \in \Ext_{\La}^4 (
_{\nu^2}M,M)$ is represented by the zero map. This shows that
$\Ext_{\La}^{2*}( {_{\nu^*}M}_{(1,0)}^i,k)$ cannot be a finitely
generated $H$-module, and a similar argument shows that the same is
true for the module $M_{(0,1)}$.

Note also that the finite generation condition cannot hold for any
nonzero indecomposable nonprojective $\La$-module which is not
$\tau$-periodic; if $X$ is such a module and $\Ext_{\La}^{2*}(
_{\nu^*}X,k)$ is finitely generated over $H$, then the rate of
growth of $\Ext_{\La}^{2*}( _{\nu^*}X,k)$ is not more than that of
$H$. However, the latter equals the Krull dimension of $H$, thus
$\gamma \left ( \Ext_{\La}^{2*}( _{\nu^*}X,k) \right ) \leq \gamma
(H) =1$. Since $X$ is nonprojective we conclude that the rate of
growth of $\Ext_{\La}^{2*}( _{\nu^*}X,k)$ is $1$, and so by
Proposition \ref{dimension} the variety $\V_H^{\nu}(X)$ is one
dimensional. But then Proposition \ref{DTrperiodic} implies $X$ is
$\tau$-periodic, a contradiction.
\end{example}

Returning to the general theory, we now impose the finite generation
hypothesis on both $M$ and $\Omega_{\La}^1(M)$. The following result
shows that, in this situation, if the variety of $M$ is nontrivial
(that is, when $M$ does not have finite projective dimension) then
there exists a homogeneous element in $H$ ``cutting down" the
variety by one dimension. Recall first that if $\eta \in
\HH^{t*}(_{\psi^*}\La_1, \La )$ is a homogeneous element, say $\eta
\in \Ext_{\Lae}^{tm}(_{\psi^m}\La_1, \La )$, then it can be
represented by a $\Lae$-linear map $f_{\eta} \colon
\Omega_{\Lae}^{tm} ( _{\psi^m}\La_1 ) \to \La$. This map yields a
commutative diagram
$$\xymatrix{
0 \ar[r] & \Omega_{\Lae}^{tm} ( _{\psi^m}\La_1 ) \ar[r]
\ar[d]^{f_{\eta}} & _{\psi^m}(P^{tm-1}) \ar[d] \ar[r] &
\Omega_{\Lae}^{tm-1} ( _{\psi^m}\La_1 ) \ar@{=}[d] \ar[r] & 0 \\
0 \ar[r] & \La \ar[r] & K_{\eta} \ar[r] & \Omega_{\Lae}^{tm-1} (
_{\psi^m}\La_1 ) \ar[r] & 0 }$$ with exact rows, in which we have
denoted by $P^i$ the $i$th module in the minimal projective bimodule
resolution of $\La$. Note that up to isomorphism the module
$K_{\eta}$ is independent of the map $f_{\eta}$ chosen to represent
$\eta$.

\begin{proposition}\label{reducingdimension}
If both {\bf{Fg($M,H,\psi,t$)}} and
{\bf{Fg($\Omega_{\La}^1(M),H,\psi,t$)}} hold and $M$ does not have
finite projective dimension, then there exists a homogeneous element
$\eta \in H$ of positive degree such that $\dim \V_H^{\psi}
(\Omega_{\Lae}^1(K_{\eta}) \otimes_{\La} M) = \dim \V_H^{\psi} (M)
-1$.
\end{proposition}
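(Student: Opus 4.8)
The plan is to exhibit $\Omega_{\Lae}^1(K_\eta)\otimes_{\La}M$ as a ``twisted Carlson module'' whose variety is $\V_H^{\psi}(M)$ cut down by the hypersurface defined by $\eta$, and to choose $\eta$ — by prime avoidance — so that this hypersurface meets no top-dimensional component of $\V_H^{\psi}(M)$. First I would record the two easy reductions. Since $M$ has infinite projective dimension, Proposition \ref{dimension} gives $d:=\dim\V_H^{\psi}(M)=\cx^t M\geq 1$, so $d-1\geq 0$. Moreover $\dim\V_H^{\psi}(\Omega_{\La}^1 M)\leq d$: with $\mathbb{P}$ the minimal projective resolution of $M$ one has $\ell_k(P_{tn+1})\leq C\,\ell_k(P_{tn})$ for a constant $C$ depending only on $\La$, because $\Omega_{\La}^{tn+1}(M)\subseteq\rad P_{tn}$ and the projective cover of a module has $k$-length at most $C$ times the $k$-length of the module; hence $\cx^t(\Omega_{\La}^1 M)\leq\cx^t M$, and Proposition \ref{dimension} applies to $\Omega_{\La}^1 M$ via {\bf{Fg($\Omega_{\La}^1(M),H,\psi,t$)}}.

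Next I would produce the structural sequence of modules. Write $|\eta|=tm$. Applying the syzygy functor to the defining sequence $0\to\La\to K_\eta\to\Omega_{\Lae}^{tm-1}({}_{\psi^m}\La_1)\to 0$ of $K_\eta$ and then tensoring over $\La$ with $M$ — which is exact here because the bimodule syzygies are projective as right $\La$-modules, and which carries the minimal bimodule resolution of ${}_{\psi^m}\La_1$ to a projective resolution of ${}_{\psi^m}M$ — yields, modulo projective summands, a short exact sequence of $\La$-modules
$$0\to\Omega_{\La}^1(M)\to\Omega_{\Lae}^1(K_\eta)\otimes_{\La}M\to\Omega_{\La}^{tm}({}_{\psi^m}M)\to 0.$$

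Now I would apply $\Ext_{\La}^{t*}({}_{\psi^*}-,\La/\ra)$ to this sequence. Using the dimension-shift isomorphisms $\Ext_{\La}^{tn}({}_{\psi^n}\Omega_{\La}^{tm}({}_{\psi^m}M),\La/\ra)\cong\Ext_{\La}^{t(n+m)}({}_{\psi^{n+m}}M,\La/\ra)$ and $\Ext_{\La}^{tn}({}_{\psi^n}\Omega_{\La}^1 M,\La/\ra)\cong\Ext_{\La}^{tn+1}({}_{\psi^n}M,\La/\ra)$, together with the identification of the connecting homomorphisms with (up to sign) multiplication by $\eta$, the long exact sequence collapses to short exact sequences of graded $H$-modules
$$0\to (E/\eta E)[-m]\to\mathcal{W}\to (0:_{E'}\eta)\to 0,$$
where $E=\Ext_{\La}^{t*}({}_{\psi^*}M,\La/\ra)$, $E'=\Ext_{\La}^{t*}({}_{\psi^*}\Omega_{\La}^1 M,\La/\ra)$, and $\mathcal{W}=\Ext_{\La}^{t*}({}_{\psi^*}(\Omega_{\Lae}^1(K_\eta)\otimes_{\La}M),\La/\ra)$. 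By the two finite generation hypotheses $E$ and $E'$ are finitely generated over $H$, hence so is $\mathcal{W}$; in particular {\bf{Fg($\Omega_{\Lae}^1(K_\eta)\otimes_{\La}M,H,\psi,t$)}} holds and $\dim\V_H^{\psi}(\Omega_{\Lae}^1(K_\eta)\otimes_{\La}M)=\dim H/\sqrt{\Ann_H\mathcal{W}}$. The hard part, and the place where the twisted theory departs from the classical one, is verifying that the connecting maps really are multiplication by $\eta$ with the twists tracked correctly; this is the twisted analogue of the classical computation expressing the variety of a Carlson module as the intersection of $\V(M)$ with the hypersurface of the cohomology class, and I expect it to follow from the explicit form of $K_\eta$ together with the Yoneda relation (\ref{Yonedaext}).

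Finally I would choose $\eta$ and count dimensions. Since $d\geq 1$, each minimal prime of $\Ann_H E$, and each minimal prime of $\Ann_H E'$, at which the dimension $d$ is attained is a graded prime not containing $H^+$, so the graded prime avoidance lemma provides a homogeneous $\eta\in H$ of positive degree lying outside all of them. For this $\eta$: the displayed sequence shows that $\eta$ annihilates both $E/\eta E$ and $(0:_{E'}\eta)$, hence $\eta^2$ annihilates $\mathcal{W}$, so $\V_H^{\psi}(\Omega_{\Lae}^1(K_\eta)\otimes_{\La}M)\subseteq\{\m\in\Maxspec H\mid\eta\in\m\}$; since also $\Ann_H\mathcal{W}\supseteq\Ann_H E\cdot\Ann_H E'$ gives $\V_H^{\psi}(\Omega_{\Lae}^1(K_\eta)\otimes_{\La}M)\subseteq\V_H^{\psi}(M)\cup\V_H^{\psi}(\Omega_{\La}^1 M)$, intersecting with the hypersurface of $\eta$ — which by the choice of $\eta$ contains no dimension-$d$ minimal prime of either annihilator, and recalling $\dim\V_H^{\psi}(\Omega_{\La}^1 M)\leq d$ — forces $\dim\V_H^{\psi}(\Omega_{\Lae}^1(K_\eta)\otimes_{\La}M)\leq d-1$. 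Conversely $(E/\eta E)[-m]$ embeds into $\mathcal{W}$, so $\V_H^{\psi}(\Omega_{\Lae}^1(K_\eta)\otimes_{\La}M)$ contains the support of $E/\eta E$ in $\Maxspec H$, which equals $\V_H^{\psi}(M)\cap\{\m\mid\eta\in\m\}$ and has dimension exactly $d-1$ by Krull's principal ideal theorem, as $\eta$ avoids every dimension-$d$ minimal prime of $\Ann_H E$. Hence $\dim\V_H^{\psi}(\Omega_{\Lae}^1(K_\eta)\otimes_{\La}M)=d-1$.
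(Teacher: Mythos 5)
Your proposal is correct in substance, but it reaches the dimension count by a genuinely different mechanism than the paper. The paper does not choose $\eta$ by prime avoidance: it invokes a slight generalization of \cite[Proposition 2.1]{Bergh1} to pick a homogeneous $\eta \in H$ acting \emph{eventually injectively} on both $E=\Ext_{\La}^{t*}({_{\psi^*}M},\La/\ra)$ and $E'=\Ext_{\La}^{t*}({_{\psi^*}\Omega_{\La}^1(M)},\La/\ra)$; once the connecting homomorphisms are identified with multiplication by $\pm\eta$, the long exact sequence breaks into short exact sequences in high degrees, and the exact drop by one is read off from the rate-of-growth formula \cite[Proposition 3.1]{Bergh1} together with Proposition \ref{dimension} ($\dim \V_H^{\psi} = \cx^t$), with no support-theoretic commutative algebra at all. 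Your route --- avoid the top-dimensional minimal primes of $\Ann_H E$ and $\Ann_H E'$, get the upper bound from $\eta^2 \in \Ann_H \mathcal{W}$ and $\Ann_H E \cdot \Ann_H E' \subseteq \Ann_H \mathcal{W}$, and the lower bound from the embedding of $(E/\eta E)[-m]$ plus the principal ideal theorem --- is the classical Carlson-module argument transplanted to the twisted setting, and it does go through: the relevant minimal primes are graded (being minimal over graded annihilators) and, having dimension at least one over the Artinian local $H^0$, do not contain $H^+$, so graded prime avoidance applies, and the graded-local dimension theory you use is available since $H$ is finitely generated over $H^0$; your preliminary estimate $\cx^t(\Omega_{\La}^1 M) \le \cx^t M$ plays the role that eventual injectivity on $E'$ plays in the paper. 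Do note, however, that the step you defer is exactly where the paper's proof spends its effort: the even-degree connecting maps are handled by \cite[Theorem III.9.1]{MacLane}, but the maps on $E'$ (your $(0:_{E'}\eta)$ term) are only ``multiplication by $\eta$'' after conjugating by the dimension-shift isomorphism coming from $0 \to \Omega_{\La}^1(M) \to P_0 \to M \to 0$, and tracking the twists there requires the extra commutative diagram built from the Yoneda relation (\ref{Yonedaext}) --- precisely the computation the paper carries out --- so your plan is complete once that diagram is written down.
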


\begin{proof}
By assumption the $H$-modules $\Ext_{\La}^{t*} ( _{\psi^*}M, \La /
\ra )$ and $\Ext_{\La}^{t*} ( _{\psi^*}\Omega_{\La}^1(M), \La / \ra
)$ are finitely generated, hence by slightly generalizing the proof
of \cite[Proposition 2.1]{Bergh1} we see that there exists a
homogeneous element $\eta \in H$ of positive degree, say $\eta \in
H^{tm} \subseteq \Ext_{\Lae}^{tm}(_{\psi^m}\La_1, \La )$, such that
the multiplication maps
\begin{eqnarray*}
\Ext_{\La}^{ti} ( _{\psi^i}M, \La / \ra ) & \xrightarrow{\cdot \eta}
& \Ext_{\La}^{t(i+m)} ( _{\psi^{i+m}}M, \La / \ra ) \\
\Ext_{\La}^{ti} ( _{\psi^i}\Omega_{\La}^1(M), \La / \ra ) &
\xrightarrow{\cdot \eta} & \Ext_{\La}^{t(i+m)} (
_{\psi^{i+m}}\Omega_{\La}^1(M), \La / \ra )
\end{eqnarray*}
are both $k$-monomorphisms for $i \gg 0$. Consider the short exact
sequence
$$0 \to \La \to K_{\eta} \to \Omega_{\Lae}^{tm-1} (
_{\psi^m}\La_1 ) \to 0$$ obtained from $\eta$. As
$\Omega_{\Lae}^{tm-1} ( _{\psi^m}\La_1 )$ is right $\La$-projective,
the sequence splits when considered as a sequence of right
$\La$-modules, and consequently the sequence
$$0 \to M \to K_{\eta} \otimes_{\La} M \to \Omega_{\Lae}^{tm-1} (
_{\psi^m}\La_1 ) \otimes_{\La} M \to 0$$ is exact. For each $i \geq
0$ the latter sequence induces a long exact sequence
\begin{eqnarray*}
& \Ext_{\La}^{ti} ( {_{\psi^i}(K_{\eta} \otimes_{\La} M)}, \La / \ra
) \to \Ext_{\La}^{ti} ( {_{\psi^i}M}, \La / \ra )
\xrightarrow{\partial_{ti}} & \\
& \Ext_{\La}^{t(i+m)} ( {_{\psi^{i+m}}M}, \La / \ra ) \to
\Ext_{\La}^{ti+1} ( {_{\psi^i}(K_{\eta} \otimes_{\La} M)}, \La / \ra
) \to & \\
& \Ext_{\La}^{ti+1} ( {_{\psi^i}M}, \La / \ra )
\xrightarrow{\partial_{ti+1}} \Ext_{\La}^{t(i+m)+1} (
{_{\psi^{i+m}}M}, \La / \ra ) &
\end{eqnarray*}
in which we have replaced $\Ext_{\La}^{j} (
{_{\psi^i}(\Omega_{\Lae}^{tm-1} ( _{\psi^m}\La_1 ) \otimes_{\La}
M)}, \La / \ra )$ with $\Ext_{\La}^{j+tm-1} ( {_{\psi^{i+m}}M}, \La
/ \ra )$, due to the fact that ${_{\psi^i}(\Omega_{\Lae}^{tm-1} (
_{\psi^m}\La_1 ) \otimes_{\La} M)}$ is a $(tm-1)$th syzygy of
${_{\psi^{i+m}}M}$. By \cite[Theorem III.9.1]{MacLane} the
connecting homomorphism $\partial_j$ is then the Yoneda product with
the extension $(-1)^j {_{\psi^i}( \eta \otimes_{\La} M)}$, in
particular we see that $\partial_{ti}$ is scalar multiplication with
$(-1)^{ti} \eta$.

Now consider the connecting homomorphism $\partial_{ti+1}$. Applying
the Yoneda relation (\ref{Yonedaext}) from Section \ref{hoc} to
$\eta$ and the short exact sequence
$$\theta \colon 0 \to \Omega_{\La}^1(M) \to P_0 \to M \to 0$$
gives the relation
$$( \eta \otimes_{\La} \Omega_{\La}^1(M) ) \circ ( {_{\psi^m}\La_1}
\otimes_{\La} \theta ) = (-1)^{tm} ( \La \otimes_{\La} \theta )
\circ ( \eta \otimes_{\La} M),$$ which we may twist by $\psi^i$ to
obtain the relation
$${_{\psi^i}( \eta \otimes_{\La} \Omega_{\La}^1(M) )} \circ
{_{\psi^{i+m}}\theta} = (-1)^{tm} {_{\psi^i}\theta} \circ
{_{\psi^i}( \eta \otimes_{\La} M)}.$$ This gives a commutative
diagram
$$\xymatrix{
\Ext_{\La}^{ti}( {_{\psi^i}\Omega_{\La}^1(M)}, \La / \ra )
\ar[d]^{\cdot (-1)^{t(i+m)+1} \eta} \ar[r]^{{_{\psi^i}\theta}} &
\Ext_{\La}^{ti+1}(
{_{\psi^i}M}, \La / \ra ) \ar[d]^{\partial_{ti+1}} \\
\Ext_{\La}^{t(i+m)}( {_{\psi^{i+m}}\Omega_{\La}^1(M)}, \La / \ra )
\ar[r]^{{_{\psi^{i+m}}\theta}} & \Ext_{\La}^{t(i+m)+1}(
{_{\psi^{i+m}}M}, \La / \ra ) }$$ in which the horizontal maps are
isomorphisms, hence the connecting homomorphism $\partial_{ti+1}$ is
also basically just scalar multiplication with $\eta$, as was the
case with $\partial_{ti}$. Consequently they are both injective for
$i \gg 0$, giving a short exact sequence
\begin{equation*}\label{exactseq}
0 \to {_{\La}^{ti}( {_{\psi^i}M}, \La / \ra )} \xrightarrow{\cdot
\eta} {_{\La}^{t(i+m)}( {_{\psi^{i+m}}M}, \La / \ra )} \to
{_{\La}^{ti+1}( {_{\psi^i}(K_{\eta} \otimes_{\La} M)}, \La / \ra )}
\to 0 \tag{$\dagger \dagger$}
\end{equation*}
for large $i$ (in which we have used the short hand notion
${_{\La}^j(-,-)}$ for $\Ext_{\La}^j(-,-)$). Note that we may
identify $\Ext_{\La}^{ti+1} ( {_{\psi^i}(K_{\eta} \otimes_{\La} M),
\La / \ra )}$ with $\Ext_{\La}^{ti} (
{_{\psi^i}(\Omega_{\Lae}^1(K_{\eta}) \otimes_{\La} M)}, \La / \ra
)$; since $K_{\eta}$ is right $\La$-projective the projective
bimodule cover
$$0 \to \Omega_{\Lae}^1(K_{\eta}) \to Q \to K_{\eta} \to 0$$
of $K_{\eta}$ splits as a sequence of right $\La$-modules, and
therefore stays exact when tensored with $M$. In addition, the
$\La$-module $Q \otimes_{\La} M$ is projective, hence
$$\Ext_{\La}^{ti+1} (
{_{\psi^i}(K_{\eta} \otimes_{\La} M), \La / \ra )} \simeq
\Ext_{\La}^{ti} ( {_{\psi^i}(\Omega_{\Lae}^1(K_{\eta}) \otimes_{\La}
M)}, \La / \ra ).$$

Consider now the $H$-module $\Ext_{\La}^{t*} (
{_{\psi^*}(\Omega_{\Lae}^1(K_{\eta}) \otimes_{\La} M)}, \La / \ra
)$, and let $w$ be an integer such that the sequence
(\ref{exactseq}) is exact for $i \geq w$. Then the submodule
$\bigoplus_{i=w}^{\infty} \Ext_{\La}^{ti} (
{_{\psi^i}(\Omega_{\Lae}^1(K_{\eta}) \otimes_{\La} M)}, \La / \ra )$
is finitely generated over $H$, being a factor module of the
submodule $\bigoplus_{i=w}^{\infty} \Ext_{\La}^{t(i+m)} (
{_{\psi^{i+m}}M}, \La / \ra )$ of the finitely generated $H$-module
$\Ext_{\La}^{t*} ( {_{\psi^{*}}M}, \La / \ra )$. But then the
$H$-module $\Ext_{\La}^{t*} ( {_{\psi^*}(\Omega_{\Lae}^1(K_{\eta})
\otimes_{\La} M)}, \La / \ra )$ must be finitely generated itself,
since each graded part $\Ext_{\La}^{tj} (
{_{\psi^j}(\Omega_{\Lae}^1(K_{\eta}) \otimes_{\La} M)}, \La / \ra )$
is finitely generated over $H^0$. Also, from \cite[Proposition
3.1]{Bergh1} we get
$$\gamma \left ( \bigoplus_{i=w}^{\infty}
\Ext_{\La}^{ti} ( {_{\psi^i}(\Omega_{\Lae}^1(K_{\eta}) \otimes_{\La}
M)}, \La / \ra ) \right ) = \gamma \left ( \bigoplus_{i=w}^{\infty}
\Ext_{\La}^{t(i+m)} ( {_{\psi^{i+m}}M}, \La / \ra ) \right ) -1,$$
and since for any $\La$-module $X$ the rate of growth of
$\Ext_{\La}^{t*} ( {_{\psi^{*}}X}, \La / \ra )$ equals that of
$\bigoplus_{i=w}^{\infty} \Ext_{\La}^{ti} ( {_{\psi^{i}}X}, \La /
\ra )$ we get
$$\gamma \left ( \Ext_{\La}^{t*} ( {_{\psi^*}(\Omega_{\Lae}^1(K_{\eta})
\otimes_{\La} M)}, \La / \ra ) \right ) = \gamma \left (
\Ext_{\La}^{t*} ( {_{\psi^{*}}M}, \La / \ra ) \right ) -1.$$
Therefore the equality $\cx^t \left ( \Omega_{\Lae}^1(K_{\eta})
\otimes_{\La} M \right ) = \cx^t M -1$ holds, and so from
Proposition \ref{dimension} we conclude that $\dim \V_H^{\psi}
(\Omega_{\Lae}^1(K_{\eta}) \otimes_{\La} M) = \dim \V_H^{\psi} (M)
-1$.
\end{proof}

Finally we turn to the setting in which {\bf{Fg($X,H,\psi,t$)}}
holds for \emph{all} $\La$-modules $X$, and derive two corollaries
from Proposition \ref{reducingdimension}. Observe first that if
{\bf{Fg($\La / \ra,H,\psi,t$)}} holds, then {\bf{Fg($S,H,\psi,t$)}}
holds for every simple $\La$-module $S$, and so by induction on the
length of a module we see that {\bf{Fg($X,H,\psi,t$)}} holds for
every $\La$-module $X$; namely, if $\ell (X) \geq 2$, choose a
submodule $Y \subset X$ such that $\ell (Y) = \ell (X)-1$. The exact
sequence
$$0 \to Y \to X \to X/Y \to 0$$ induces the exact sequence
$$\Ext_{\La}^{t*} ( _{\psi^*}(X/Y), \La / \ra ) \to \Ext_{\La}^{t*} (
_{\psi^*}X, \La / \ra ) \to \Ext_{\La}^{t*} ( _{\psi^*}Y, \La / \ra
)$$ of $H$-modules, and since the end terms are finite over $H$, so
is the middle term.

\begin{corollary}\label{reduceperiodic}
If {\bf{Fg($\La / \ra,H,\psi,t$)}} holds and $\dim \V_H^{\psi} (M) =
d>0$, then there exist homogeneous elements $\eta_1, \dots,
\eta_{d-1} \in H$ of positive degrees such that the module
$$\Omega_{\Lae}^1(K_{\eta_{d-1}}) \otimes_{\La} \cdots \otimes_{\La}
\Omega_{\Lae}^1(K_{\eta_1}) \otimes_{\La} M$$ is eventually
$\psi^i$-periodic for some $i \geq 1$.
\end{corollary}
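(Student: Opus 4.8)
The plan is to apply Proposition \ref{reducingdimension} repeatedly, cutting the dimension of the twisted variety down by one at each step, and then to invoke Proposition \ref{periodic} once the variety has become one dimensional.

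First I would record the only ingredient needed beyond those two propositions: since {\bf{Fg($\La / \ra,H,\psi,t$)}} holds, the observation made just before the statement of the corollary shows that {\bf{Fg($X,H,\psi,t$)}} holds for \emph{every} $\La$-module $X$. In particular it holds for $M$, for each of its syzygies, and for all the modules built up in the iteration below, so the finite generation hypotheses needed to apply Proposition \ref{reducingdimension} will be available at every stage. I would also note that $\dim \V_H^{\psi}(M) = d > 0$ forces $M$ to have infinite projective dimension, by Proposition \ref{dimension}.

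The core of the argument is then an induction producing the elements one at a time. Set $N_0 = M$, and suppose inductively that for some $j$ with $0 \leq j \leq d-2$ we have produced homogeneous elements $\eta_1, \dots, \eta_j \in H$ of positive degree and a module $N_j = \Omega_{\Lae}^1(K_{\eta_j}) \otimes_{\La} \cdots \otimes_{\La} \Omega_{\Lae}^1(K_{\eta_1}) \otimes_{\La} M$ with $\dim \V_H^{\psi}(N_j) = d - j$. Since $d - j \geq 2 > 0$, the module $N_j$ is nonzero and has infinite projective dimension, and since {\bf{Fg}} holds for $N_j$ and for $\Omega_{\La}^1(N_j)$, Proposition \ref{reducingdimension} supplies a homogeneous element $\eta_{j+1} \in H$ of positive degree with
$$\dim \V_H^{\psi}\bigl(\Omega_{\Lae}^1(K_{\eta_{j+1}}) \otimes_{\La} N_j\bigr) = \dim \V_H^{\psi}(N_j) - 1 = d - j - 1;$$
put $N_{j+1} = \Omega_{\Lae}^1(K_{\eta_{j+1}}) \otimes_{\La} N_j$. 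After $d-1$ steps this yields $\eta_1, \dots, \eta_{d-1}$ and the module $N_{d-1}$ with $\dim \V_H^{\psi}(N_{d-1}) = 1$; when $d = 1$ no elements are needed and one simply takes $N_0 = M$. Finally, since {\bf{Fg($N_{d-1},H,\psi,t$)}} holds, Proposition \ref{periodic} applies to $N_{d-1}$ and gives that $N_{d-1}$ is eventually $\psi^i$-periodic for some $i \geq 1$, which is exactly the assertion of the corollary.

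I do not expect any serious obstacle: the corollary is essentially a formal consequence of Propositions \ref{reducingdimension} and \ref{periodic}. The only points needing care are the routine verifications that at each stage the module in hand is nonzero (guaranteed because its variety has positive dimension until the very last one, which is one dimensional), that the finite generation hypothesis is inherited (guaranteed by the passage from {\bf{Fg($\La/\ra,\dots$)}} to {\bf{Fg($X,\dots$)}} for all $X$), and that Proposition \ref{reducingdimension} is legitimately applicable at step $j$, i.e.\ that $N_j$ has infinite projective dimension — which is Proposition \ref{dimension} applied to the nontrivial variety $\V_H^{\psi}(N_j)$.
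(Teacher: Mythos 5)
Your proposal is correct and follows exactly the route the paper intends: the paper's proof is just the remark that the corollary is a direct consequence of Propositions \ref{periodic} and \ref{reducingdimension}, and your iteration (using that {\bf{Fg($\La/\ra,H,\psi,t$)}} gives {\bf{Fg($X,H,\psi,t$)}} for all $X$, so Proposition \ref{reducingdimension} can be applied $d-1$ times before invoking Proposition \ref{periodic}) is precisely the implicit argument, with the nonzero/infinite-projective-dimension checks supplied correctly via Proposition \ref{dimension}.
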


\begin{proof}
This is a direct consequence of Proposition \ref{periodic} and
Proposition \ref{reducingdimension}.
\end{proof}

\begin{corollary}\label{reduceDTrperiodic}
Suppose $k$ is a field and $\La$ is a Frobenius algebra, and let
$\La \xrightarrow{\nu} \La$ be a Nakayama automorphism. If
{\bf{Fg($\La / \ra,H,\nu^n,2n$)}} holds for some $n \geq 1$ and
$\dim \V_H^{\nu} (M) = d>0$, then there exist homogeneous elements
$\eta_1, \dots, \eta_{d-1} \in H$ of positive degrees such that
every nonzero nonprojective indecomposable summand of
$$\Omega_{\Lae}^1(K_{\eta_{d-1}}) \otimes_{\La} \cdots \otimes_{\La}
\Omega_{\Lae}^1(K_{\eta_1}) \otimes_{\La} M$$ is $\tau$-periodic.
\end{corollary}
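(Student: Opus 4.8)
The plan is to deduce this from Corollary~\ref{reduceperiodic} and Proposition~\ref{DTrperiodic}, the only extra ingredient being the standard fact that over a selfinjective algebra a finitely generated module has finite projective dimension precisely when it is projective. First I would recall, as observed in the paragraph preceding Corollary~\ref{reduceperiodic}, that the hypothesis \textbf{Fg($\La / \ra,H,\nu^n,2n$)} forces \textbf{Fg($X,H,\nu^n,2n$)} to hold for \emph{every} $\La$-module $X$. Applying Corollary~\ref{reduceperiodic} with $\psi = \nu^n$ and $t = 2n$ then produces homogeneous elements $\eta_1, \dots, \eta_{d-1} \in H$ of positive degrees. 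Moreover, inspecting the proof of that corollary (which applies Proposition~\ref{reducingdimension} exactly $d-1$ times, each step being legitimate because the module involved has a variety of positive dimension, hence infinite projective dimension by Proposition~\ref{dimension}), one sees that the module
$$N = \Omega_{\Lae}^1(K_{\eta_{d-1}}) \otimes_{\La} \cdots \otimes_{\La} \Omega_{\Lae}^1(K_{\eta_1}) \otimes_{\La} M$$
satisfies $\dim \V_H^{\nu}(N) = d - (d-1) = 1$; alternatively this follows from Proposition~\ref{periodic} applied to $N$, for which \textbf{Fg} holds.

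Next I would pass to the indecomposable summands of $N$. Write $N = N' \oplus Q$ with $Q$ projective and $N'$ having no nonzero projective summand. Since projective modules have trivial variety by Proposition~\ref{properties}(ii), Proposition~\ref{properties}(iv) gives $\dim \V_H^{\nu}(N') = \dim \V_H^{\nu}(N) = 1$; in particular $N' \neq 0$. Now let $X$ be any nonzero nonprojective indecomposable summand of $N$; then $X$ is a summand of $N'$, so by Proposition~\ref{properties}(iv) we have $\V_H^{\nu}(X) \subseteq \V_H^{\nu}(N')$ and therefore $\dim \V_H^{\nu}(X) \leq 1$. On the other hand, since $\La$ is selfinjective and $X$ is nonprojective, $X$ has infinite projective dimension, so $\V_H^{\nu}(X)$ is nontrivial by Proposition~\ref{dimension}, whence $\dim \V_H^{\nu}(X) \geq 1$. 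Thus $\dim \V_H^{\nu}(X) = 1$.

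Finally, since $X$ is indecomposable and nonprojective it has no nonzero projective summand, and \textbf{Fg($X,H,\nu^n,2n$)} holds, so Proposition~\ref{DTrperiodic} applies and yields $X \simeq \tau^p(X)$ for some $p \geq 1$; that is, $X$ is $\tau$-periodic. As $X$ was an arbitrary nonzero nonprojective indecomposable summand of $\Omega_{\Lae}^1(K_{\eta_{d-1}}) \otimes_{\La} \cdots \otimes_{\La} \Omega_{\Lae}^1(K_{\eta_1}) \otimes_{\La} M$, this gives the claim. I do not expect a serious obstacle here: the substantive work is already contained in Propositions~\ref{reducingdimension} and~\ref{DTrperiodic}. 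The only point demanding a little care is the bookkeeping around possible projective summands of $N$, together with the use of selfinjectivity to pin the dimension of the variety of each nonprojective indecomposable summand to exactly $1$, since Proposition~\ref{DTrperiodic} applies only to modules whose variety is one dimensional and which have no projective summand.
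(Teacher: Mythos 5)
Your proposal is correct and follows the same route as the paper, whose proof of this corollary is simply the one-line remark that it is a direct consequence of Propositions~\ref{DTrperiodic} and~\ref{reducingdimension}. The details you supply -- that {\bf{Fg}} propagates from $\La/\ra$ to all modules, that iterating Proposition~\ref{reducingdimension} brings the variety down to dimension one, and the bookkeeping with Proposition~\ref{properties}(ii),(iv) and Proposition~\ref{dimension} needed to apply Proposition~\ref{DTrperiodic} to each nonprojective indecomposable summand (which must have no projective summand and variety of dimension exactly one) -- are exactly the steps the paper leaves implicit, and they are carried out correctly.
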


\begin{proof}
This is a direct consequence of Proposition \ref{DTrperiodic} and
Proposition \ref{reducingdimension}.
\end{proof}

\section*{Acknowledgement}

I would like to thank my supervisor {\O}yvind Solberg for valuable
suggestions and comments on this paper.

\end{document}